\documentclass[11pt]{amsart}
\addtolength{\oddsidemargin}{-.5in}
\addtolength{\evensidemargin}{-.5in}
\addtolength{\textwidth}{1.0in} 
\usepackage[all]{xy}
\usepackage{pstricks}
\usepackage{color}

\usepackage{tikz}

\theoremstyle{plain}
\newtheorem{thm}{Theorem}[section]
\newtheorem{theorem}[thm]{Theorem}

\newtheorem{lemma}[thm]{Lemma}
\newtheorem{corollary}[thm]{Corollary}

\theoremstyle{definition}
\newtheorem{remark}[thm]{Remark}

\newtheorem{definition}[thm]{Definition}

\newtheorem{question}[thm]{Question}

\numberwithin{equation}{section}

\newcommand{\sI}{{\mathcal I}}


\newcommand{\C}{{\mathbb C}}

\newcommand{\BP}{{\mathbb P}}
\newcommand{\Q}{{\mathbb Q}}
\newcommand{\R}{{\mathbb R}}

\newcommand{\Z}{{\mathbb Z}}
\newcommand{\PP}{\ensuremath{\mathbb{P}}}

\newcommand{\hol}{\ensuremath{\mathcal{O}}}



\title [Coble's question and inertia groups on surfaces]{Coble's question 
and complex dynamics of inertia groups on surfaces}

\author{Keiji Oguiso} 
\address{Mathematical Sciences, the University of Tokyo, Meguro Komaba 3-8-1, Tokyo, Japan and Korea Institute for Advanced Study, Hoegiro 87, Seoul,
133-722, Korea}
\email{oguiso@ms.u-tokyo.ac.jp}

\author{Xun Yu}
\address{Center for Applied Mathematics, Tianjin University, 92 Weijin Road, Nankai District,
Tianjin 300072, P. R. China.
}
\email{xunyu@tju.edu.cn}

\thanks{The first named author is supported by JSPS Grant-in-Aid (S) 15H05738, JSPS Grant-in-Aid (B) 15H03611, and by KIAS Scholar Program. The second named author is supported by NSFC (No. 11701413).}


\begin{document}

\maketitle

\begin{abstract} We study the inertia groups of some smooth rational curves on 2-elementary K3 surfaces and singular K3 surfaces from the view of topological entropy, with an application to a long standing open question of Coble on the inertia group of a generic Coble surface. 
\end{abstract}

\section{Introduction}\label{sect0}

In this introduction, we assume that the base field is the complex number field $\C$. For an algebraic subset $W$ of a variety $V$, we define 
$${\rm Dec}\, (W) = {\rm Dec}\,(V, W) := \{f \in {\rm Aut}\, (V)\,|\, f(W) = W\}\,\, ,$$ 
$${\rm Ine}\, (W) = {\rm Ine}\,(V, W) := \{f \in {\rm Dec}\, (V, W)\,|\, f|_{W} = id_{W}\}\,\, .$$
We call the groups ${\rm Dec}\, (V, W)$ and ${\rm Ine}\, (V, W)$ the decomposition group of $W \subset V$ and the inertia group of $W \subset V$. These two groups for curves on surfaces will play essential roles in this paper.

Let $C \subset \BP^2$ be a generic nodal sextic plane curve with ten nodes. The classical Coble surface $Y = Y_{C}$ is the blowings up  
$$\pi: Y = Y_{C} \to \BP^2$$ 
at the ten nodes of $C$. Let $B \subset Y$ be the proper transform of $C$. Then $B \simeq \BP^1$ and $B$ is the unique element of $|-2K_Y|$ by the adjunction formula and $(-2K_Y)^2 = -4$. Therefore $f(B) = B$ for any $f \in {\rm Aut}\, (Y)$, i.e., ${\rm Aut}\, (Y) = {\rm Dec}\, (Y, B)$ and we obtain a natural group homomorphism
$$\rho_Y : {\rm Aut}\, (Y) \to {\rm Aut}\, (B) = {\rm PGL}\, (2, \C)\,\, ;\,\, 
f \mapsto f|_B\,\, .$$
This homomorphism is first considered by Coble and has attracted many authors since then (See \cite{AD18} for long history and see also \cite{DZ01}, \cite{DK13} for other interesting aspects of Coble surfaces). In particular, the following natural question (\cite[Page 245]{Co19}, see also \cite[Section 4]{AD18}) remains unsolved since Coble asked around 1919:

\begin{question}\label{ques01} Is $\rho_Y$ injective? If otherwise, what can one say about ${\rm Ker}\, \rho_Y$, i.e., ${\rm Ine}\, (B)$?
\end{question}

The primary aim of this paper is to give the following answer of alternative type to this question:

\begin{theorem}\label{thm01} Either $\rho_Y$ is injective or ${\rm Ine}\, (B)$ contains a non-commutative free subgroup isomorphic to $\Z * \Z$ 
and an element of positive entropy. 
\end{theorem}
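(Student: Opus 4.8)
The plan is to exploit that ${\rm Ine}(B)=\ker\rho_Y$ is a normal subgroup of $G:={\rm Aut}(Y)$ and to play its normality against the hyperbolic dynamics of $G$ on the Picard lattice. Let $\varphi\colon G\to \mathrm{O}(\mathrm{Pic}(Y))$ be the natural representation. As $Y$ is rational, $\mathrm{Pic}(Y)=H^2(Y,\Z)$ has signature $(1,10)$, and since every automorphism fixes $K_Y$ the image $\varphi(G)$ preserves $K_Y^{\perp}\cong E_{10}$, a lattice of signature $(1,9)$; thus $\varphi(G)$ is a discrete group of isometries of the hyperbolic space $\mathbb{H}^{9}$ attached to the positive cone of $K_Y^{\perp}$. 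Two inputs drive the proof. First, for a general Coble surface $\varphi(G)$ is non-elementary and contains loxodromic isometries: this reflects the structure of ${\rm Aut}(Y)$ as an essentially arithmetic subgroup of the Coxeter group of $E_{10}$ (compare \cite{DZ01}), and by Gromov--Yomdin an isometry with spectral radius $>1$ on $\mathrm{Pic}(Y)$ is precisely an automorphism of positive topological entropy. Second, and this is the crux, I would show that $\varphi$ is faithful: an automorphism with $\varphi(g)=\mathrm{id}$ preserves each of the ten $(-1)$-curves $E_i$ and each class $\ell-e_i-e_j$, whose unique effective member meets $E_i$ in the point cut out by the direction $\overline{p_ip_j}$; for a general node configuration this is a third fixed point on $E_i$ beyond the two points of $B\cap E_i$, so $g|_{E_i}=\mathrm{id}$, and running over this rigid web of curves with fixed classes forces $g=\mathrm{id}$. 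I would isolate these two statements as preliminary lemmas.

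Granting them, the argument is short. Assume $\rho_Y$ is not injective, so that ${\rm Ine}(B)\neq\{1\}$; by faithfulness $\varphi({\rm Ine}(B))\neq\{1\}$. As ${\rm Ine}(B)$ is normal in $G$, its image is a nontrivial normal subgroup of the non-elementary group $\varphi(G)$. A standard fact about discrete isometry groups of $\mathbb{H}^{n}$ (equivalently, the surface Tits alternative in the spirit of Cantat) is that a nontrivial normal subgroup of a non-elementary group shares its limit set, hence is itself non-elementary; in particular $\varphi({\rm Ine}(B))$ contains two loxodromic isometries with pairwise distinct pairs of fixed points. The ping-pong lemma applied to high powers of these two isometries yields a free subgroup $\Z*\Z\le\varphi({\rm Ine}(B))$, while each loxodromic element has spectral radius $>1$, hence positive entropy. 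I would then lift back: preimages in ${\rm Ine}(B)$ of two free generators again generate $\Z*\Z$, since any relation among the lifts would project to a relation in the free image; and a preimage of a loxodromic isometry has the same action on $\mathrm{Pic}(Y)$, so it is an element of ${\rm Ine}(B)$ of positive entropy. This is exactly the second alternative.

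The main obstacle is to secure the two inputs for a general Coble surface, and here I expect to go through the geometry announced in the abstract. It is cleanest to realize ${\rm Ine}(B)$ and its positive-entropy elements on the K3 double cover $\pi\colon X\to Y$ branched along $B\in|-2K_Y|$: this $X$ is a $2$-elementary K3 surface carrying a non-symplectic involution $\iota$ whose fixed curve $R$ maps isomorphically onto $B$ and is a $(-2)$-curve, and every $f\in G$ lifts to the centralizer of $\iota$ in ${\rm Aut}(X)$, with ${\rm Ine}(B)$ corresponding to the lifts that fix $R$ pointwise. On $X$ the global Torelli theorem makes both the faithfulness of the Picard action and the supply of positive-entropy automorphisms commuting with $\iota$ tractable through the invariant and transcendental lattices. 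The delicate point is to control the normal action of such a lift along $R$, that is, to rule out a nontrivial scalar on $N_{R/X}$, since this is precisely what would let a nontrivial automorphism act trivially on $\mathrm{Pic}(Y)$ and break faithfulness; for special members this is where the singular-K3 analysis enters. Once these K3-theoretic inputs, which form the technical heart of the paper, are in place, I expect the hyperbolic-dynamics mechanism above to be essentially formal.
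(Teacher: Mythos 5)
Your route is genuinely different from the paper's. The paper passes to the K3 double cover $p:\tilde Y\to Y$ branched along $B$ (a $2$-elementary K3 with $\rho=a=11$), and from any $g\in{\rm Ine}(B)\setminus\{1\}$ it manufactures two \emph{distinct} genus one fibrations $|H+\tilde B|$ and $|g(H)+\tilde B|$, using that the kernel of ${\rm Aut}(\tilde Y)\to {\rm O}({\rm NS}(\tilde Y))$ is $\{id,\theta\}$ (the transcendental lattice has odd rank $11$) and that smooth rational curves span ${\rm NS}(\tilde Y)_{\Q}$. Infinite-order Mordell--Weil translations of these two fibrations fixing $\tilde B$ and a common point $P\in\tilde B$ generate a subgroup of ${\rm Dec}(\tilde Y,\tilde B,P)$ preserving no genus one fibration, hence containing $\Z*\Z$ by the K3 Tits alternative of \cite{Og06}, \cite{Og07}; since ${\rm Aut}(\BP^1,P)$ is solvable, Nielsen--Schreier forces a copy of $\Z*\Z$ into the kernel ${\rm Ine}(\tilde B)$, and positive entropy follows. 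The point of that construction is that it needs no a priori knowledge of the size of ${\rm Aut}(Y)$: the hyperbolic dynamics is generated on the spot from elliptic fibrations. Your argument instead takes as input that $\varphi({\rm Aut}(Y))$ is a non-elementary, essentially arithmetic subgroup of ${\rm O}(1,9)$ and runs a normal-subgroup/limit-set argument. That input is available for the generic Coble surface (cf.\ \cite{DZ01}), and your faithfulness lemma is correct (provable by your web of $(-1)$-curves and the classes $\ell-e_i-e_j$, or more cleanly from the odd rank of $T(\tilde Y)$), so the skeleton can be made to work; but it buys the conclusion at the price of a heavier external structure theorem that the paper's proof entirely avoids.

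There is one concrete hole in your normal-subgroup step as written. The ``standard fact'' that a nontrivial normal subgroup of a non-elementary discrete group shares its limit set is a statement about \emph{infinite} normal subgroups; a nontrivial \emph{finite} normal subgroup is always elementary and sees nothing of the limit set. Since all you know about ${\rm Ine}(B)$ at the outset is that it is nontrivial, you must separately exclude that $\varphi({\rm Ine}(B))$ is a nontrivial finite normal subgroup of $\varphi({\rm Aut}(Y))$. This can be repaired --- an element of a finite normal subgroup centralizes a finite-index subgroup, hence fixes the axis endpoints of every loxodromic element there, and for the generic Coble surface the limit set is all of $\partial\mathbb{H}^{9}$, forcing such an element to be trivial --- but the repair again leans on the arithmeticity/fullness of the limit set and does not follow from non-elementarity alone. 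The paper's Mordell--Weil plus Nielsen--Schreier mechanism sidesteps this entirely, because the free group it lands inside ${\rm Ine}(B)$ is built from fibration translations rather than extracted from ${\rm Ine}(B)$ itself. Finally, the worry in your last paragraph about a nontrivial scalar on the normal bundle of the ramification curve is a red herring for faithfulness: triviality of the ${\rm Pic}$-action is already excluded lattice-theoretically on the K3 cover.
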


Our approach is in some sense indirect. Indeed, we deduce Theorem \ref{thm01} from our study of 2-elementary K3 surfaces (Theorem \ref{thm02}). The notion of 2-elementary K3 surface is introduced by Nikulin \cite[Section 4]{Ni81}).

We call a K3 surface $X$ 2-elementary if ${\rm NS}\, (X)^*/{\rm NS}\, (X) \simeq (\Z/2\Z)^{\oplus a}$ for some positive integer $a = a(X)$. Then $a(X) \le \rho(X) := {\rm rank}\, {\rm NS}\, (X)$ and $X$ has the involution $\theta$ such that 
$$\theta^{*}|_{{\rm NS}\, (X)} = id_{{\rm NS}\, (X)}\,\, ,\,\, \theta^{*}\omega_X = -\omega_X\,\, .$$
Here $\omega_X$ is a nowhere vanishing holomorphic 2-form on $X$. Note that $\theta$ is in the center of ${\rm Aut}\, (X)$ (See \cite[Section 4]{Ni81} and Section \ref{sect5} for basic properties of 2-elementary K3 surfaces). 

Our actual main theorem is the following:

\begin{theorem}\label{thm02} Let $X$ be a 2-elementary K3 surface such that $\rho(X) + a(X) = 22$, hence $\rho(X) \ge 11$. Then: 
\begin{enumerate}
\item If $\rho(X) \ge 12$, then there exists a smooth rational curve $C \subset X$ such that ${\rm Ine}\,(C)$ contains a non-commutative free subgroup isomorphic to $\Z * \Z$ and an element of positive entropy.
\item If $\rho(X) = 11$, then $C := X^{\theta}$ is a smooth rational curve (\cite[Section 4]{Ni81}) and ${\rm Ine}\,(C)$ contains a non-commutative free subgroup isomorphic to $\Z * \Z$ and an element of positive entropy unless ${\rm Ine}\, (C) = \{id_X, \theta\}$.
\end{enumerate}
\end{theorem}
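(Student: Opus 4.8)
The plan is to reduce the whole statement to the dynamics of a single normal subgroup inside the orthogonal group ${\rm O}({\rm NS}(X))$ acting on the hyperbolic space $\BH^{\rho(X)-1}$ attached to the positive cone of ${\rm NS}(X)\otimes\R$. First I would pin down the curve. Since $\rho(X)+a(X)=22$, Nikulin's description of the fixed locus gives genus $g=(22-\rho(X)-a(X))/2=0$, so $X^{\theta}$ is a disjoint union of smooth rational curves; in case (2) it is the single curve $C=X^{\theta}$, and in case (1) I let $C$ be one of its $\rho(X)-10\ge 2$ components. In either case $\theta$ fixes $C$ pointwise, so $\theta\in{\rm Ine}(C)$; and because $\theta$ is central, every automorphism preserves $X^{\theta}$, so ${\rm Dec}(C)$ has finite index in ${\rm Aut}(X)$ and equals it in case (2). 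A one-line check shows ${\rm Ine}(C)$ is normal in ${\rm Dec}(C)$: for $g\in{\rm Dec}(C)$ and $f\in{\rm Ine}(C)$, the element $gfg^{-1}$ preserves $C$ and restricts to $g|_C\circ{\rm id}\circ(g|_C)^{-1}={\rm id}$.

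Next I would linearize. The key lemma is that the kernel of ${\rm Aut}(X)\to{\rm O}({\rm NS}(X))$ is exactly $\{{\rm id}_X,\theta\}$: an automorphism trivial on ${\rm NS}(X)$ is a Hodge isometry of $T(X)$ acting on $\omega_X$ by a root of unity, and for a $2$-elementary $T(X)$ with $\rho(X)+a(X)=22$ this scalar can only be $\pm 1$. Granting this, let $\Gamma$ be the image of ${\rm Dec}(C)$ and $N\trianglelefteq\Gamma$ the image of ${\rm Ine}(C)$, so that ${\rm Ine}(C)/\{{\rm id}_X,\theta\}\cong N$. Because these surfaces carry many automorphisms of positive entropy (the Picard number is at least $11$ and they are not among the K3 surfaces with finite automorphism group), $\Gamma$ is a non-elementary discrete subgroup of ${\rm Isom}(\BH^{\rho(X)-1})$ that acts irreducibly on ${\rm NS}(X)\otimes\R$.

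Now I would run the dichotomy on $N$. As $N$ is normal in the non-elementary discrete group $\Gamma$, its limit set is $\Gamma$-invariant, and since $\Gamma$ acts minimally on its own limit set (which is infinite), $N$ is either finite or itself non-elementary. If $N$ is non-elementary it contains two hyperbolic isometries with disjoint pairs of fixed points on $\partial\BH^{\rho(X)-1}$; a ping-pong argument (Tits' construction) shows that high powers of these generate a free group $\Z*\Z$, and since these isometries have spectral radius $>1$ on ${\rm NS}(X)$ they lift, by Gromov--Yomdin, to automorphisms of positive entropy. Thus ${\rm Ine}(C)$ contains both $\Z*\Z$ and a positive-entropy element. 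If instead $N$ is finite, its fixed-point set in $\BH^{\rho(X)-1}$ is a nonempty totally geodesic subspace preserved by $\Gamma$; irreducibility and non-elementarity of $\Gamma$ force it to be everything, so every element of $N$ acts by a scalar on ${\rm NS}(X)\otimes\R$, and since automorphisms preserve the ample cone the scalar is $+1$. Hence $N$ is trivial and ${\rm Ine}(C)=\{{\rm id}_X,\theta\}$. In case (2) this is precisely the asserted alternative, with the exceptional value $\{{\rm id}_X,\theta\}$ allowed.

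The hard part will be case (1): there I must ensure that the \emph{non}-exceptional branch actually occurs for a suitable component, i.e.\ that $C$ may be chosen with ${\rm Ine}(C)\neq\{{\rm id}_X,\theta\}$, equivalently that $N$ is infinite for some component. This is exactly where the hypothesis $\rho(X)\ge 12$ — the presence of at least two fixed rational curves — must enter, and I expect it to be the main obstacle. The natural attempt is to use the Torelli theorem to build an automorphism whose action on ${\rm NS}(X)$ fixes the classes supported near $C$ while acting hyperbolically on the orthogonal complement, so that it fixes $C$ pointwise yet has positive entropy; the extra components and the larger lattice should provide the room to do this. The two subsidiary inputs — the kernel computation $\{{\rm id}_X,\theta\}$ and the non-elementarity together with irreducibility of $\Gamma$ — are the other points where the special hypothesis $\rho(X)+a(X)=22$ does the real work, and I would establish them first.
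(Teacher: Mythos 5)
Your proposal has two genuine gaps, one in each part of the theorem. For part (1) you explicitly defer the essential content: the theorem asserts unconditionally that a curve $C$ with infinite, positive-entropy inertia group \emph{exists} whenever $\rho\ge 12$, and your sketch reduces this to ``use the Torelli theorem to build an automorphism \dots'' without carrying it out. This is where almost all of the paper's work lies: it runs through the eleven possible lattices ${\rm NS}(X)$ from Nikulin's classification and, in each case, exhibits an explicit elliptic fibration whose reducible fibres contain the components of $X^{\theta}$, then builds from them two distinct genus one fibrations $\Phi_{|E_1|},\Phi_{|E_2|}$ of positive Mordell--Weil rank whose fibres $E_1,E_2$ share a configuration $C,R$ with $C\cap R\neq\emptyset$; translations in the two Mordell--Weil groups fixing a common point of $C$ then generate a non--almost-abelian subgroup of ${\rm Dec}(X,C,P)$, and a Nielsen--Schreier argument (the restriction to ${\rm Aut}(C,P)$ is solvable, so the kernel of $\Z*\Z\to{\rm Aut}(C,P)$ is a nonabelian free group inside ${\rm Ine}(C)$) finishes via the Tits-alternative theorem for K3 automorphism groups. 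Your plan does not produce these automorphisms, and no soft argument with $\Gamma={\rm Dec}(C)$ alone can, since one must first know $\Gamma$ is non-elementary and that enough of it lands in ${\rm Ine}(C)$.

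For part (2) the dichotomy argument breaks at the ``finite normal subgroup'' step. You claim $\Gamma$ acts irreducibly on ${\rm NS}(X)\otimes\R$, but ${\rm Dec}(C)$ fixes the class $[C]$ (with $[C]^2=-2$), hence preserves the proper nondegenerate subspace $[C]^{\perp}$ of signature $(1,\rho-2)$ and the corresponding totally geodesic hypersurface of $\BH^{\rho-1}$. So from ``$N$ finite'' you cannot conclude that ${\rm Fix}(N)$ is everything, and a nontrivial finite $N$ is exactly the case the theorem must exclude (if ${\rm Ine}(C)$ were, say, $(\Z/2)^2$, the statement would fail). The paper closes this by a direct construction: if $g\in{\rm Ine}(C)\setminus\{id_X,\theta\}$, then $g^*\neq id$ on ${\rm NS}(X)$ (here $\rk T(X)=11$ is odd, which is why the kernel computation is easy precisely when $\rho=11$; your one-line justification for general $\rho$ is not adequate, though that lemma is not needed for $\rho\ge 12$), hence $g$ moves some smooth rational curve $H$ among eleven whose classes span ${\rm NS}(X)_{\Q}$; since $g|_C=id_C$ and $H$ meets $C$ in exactly two points, $E_1=H+C$ and $E_2=g(H)+C$ give two distinct genus one fibrations of positive Mordell--Weil rank through a common point of $C$, and the same criterion as in part (1) applies. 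The part of your argument that is sound --- a non-elementary normal subgroup contains $\Z*\Z$ and hyperbolic, hence positive-entropy, elements --- is a reasonable substitute for the paper's Tits-alternative input, but it only bites after the existence and finiteness issues above are settled.
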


Let us return back to our classical Coble surface $Y$. Consider the finite double cover $p : \tilde{Y} \to Y$ branched along $B \in |-2K_Y|$. Then $\tilde{Y}$ is a 2-elementary K3 surface of Picard number $\rho(\tilde{Y}) = 11$ and $a(\tilde{Y}) = 11$ with the covering involution of $p$ as $\theta$. Moreover, we have
$${\rm Aut}\, (\tilde{Y}) = {\rm Dec}\, (\tilde{Y}, \tilde{B})\,\, ,\,\, {\rm Dec}\,(Y, B) = {\rm Aut}\,(Y) = {\rm Aut}\, (\tilde{Y})/\langle \theta \rangle\,\, .$$
Here $\tilde{B} \simeq B \simeq \BP^1$ is the ramification divisor of $p$, i.e., $\tilde{B} = \tilde{Y}^{\theta}$. Theorem \ref{thm01} is then an obvious consequence of Theorem \ref{thm02} (2). 

Our theorem \ref{thm02} (1) is a generalization of 
\cite[Theorem 1.2]{Og18} and also gives a complete affirmative answer to the question in \cite[Remark 4.4]{Og18} when $\rho(X) \ge 12$.

It happens that ${\rm Aut}\, (X)$ has no element of positive entropy for some 2-elementary K3 surface $X$. For instance, any generic K3 surface $X$ of degree $2$ is a 2-elementary K3 surface with ${\rm Aut}\, (X) \simeq \Z/2\Z$ (so that no automorphism of positive entropy). The condition $\rho(X) + a(X) = 22$ is the condition that guarantees that $X^{\theta} \not= \emptyset$ and ${\rm Aut}\, (X)$ has an element of positive entropy (see Lemma \ref{lem:22}). 

It is also interesting to consider a similar question for inertia groups of singular K3 surfaces, i.e., complex K3 surfaces of maximum Picard number $20$ (\cite{SI77}). Recall that the automorphism group of a singular K3 surface always contains an element of positive entropy (\cite[Theorem 1.6 (1)]{Og07}). 

In this direction, we have the following answer, which is also a generalization of \cite[Theorem 1.3]{Og18} (See also \cite[Remark 5.5]{Og18}):

\begin{theorem}\label{thm03} Every singular K3 surface $X$ has a smooth rational curve $C$ such that ${\rm Ine}\,(C)$ contains a non-commutative free subgroup isomorphic to $\Z * \Z$ and an element of positive entropy.
\end{theorem}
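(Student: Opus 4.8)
The plan is to realize every singular K3 surface $X$ with an explicit geometric structure from which many automorphisms fixing one fixed smooth rational curve can be read off, and then to extract positive entropy and freeness by a purely dynamical argument on ${\rm NS}\,(X)$. Concretely, I would first look for a smooth rational curve $C \subset X$ (so $C^2 = -2$) that is simultaneously a section of several elliptic fibrations $\pi_i : X \to \BP^1$, $i = 1,2,3$; equivalently, primitive nef isotropic classes $f_i \in {\rm NS}\,(X)$ with $f_i^2 = 0$ and $C \cdot f_i = 1$. Taking $C$ as the zero section of each $\pi_i$, the fibrewise inversion $\iota_i \in {\rm Aut}\,(X)$ acts as $-1$ on every fibre and hence fixes $C$ pointwise, so $\iota_i \in {\rm Ine}\,(C)$. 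Since ${\rm Ine}\,(C)$ is a group, every word in the $\iota_i$ again lies in ${\rm Ine}\,(C)$, and the whole problem is reduced to showing that the subgroup they generate contains an element of positive entropy and a free group $\Z * \Z$.

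For the dynamics I would pass to the induced action on ${\rm NS}\,(X)_{\R}$, a lattice of signature $(1,19)$. Each involution $\iota_i^{*}$ fixes the classes $C$ and $f_i$ and acts as $-1$ on the Mordell--Weil lattice of $\pi_i$; in particular its $+1$-eigenspace contains the hyperbolic plane $\langle C, f_i \rangle$. A product $g = \iota_1 \iota_2$ has positive entropy exactly when its spectral radius on ${\rm NS}\,(X)_{\R}$ exceeds $1$ (Gromov--Yomdin), and I expect this once $f_1 \cdot f_2$ is large, the point being that the two fixed sublattices are then in general enough position that their intersection carries no class of positive self-intersection. A single such $g$ only generates an infinite dihedral group, so to reach $\Z * \Z$ I would use the three fibrations to form two independent positive entropy elements $g_1 = \iota_1\iota_2$ and $g_2 = \iota_1\iota_3$ and then invoke the North--South dynamics of hyperbolic isometries on the hyperbolic space $\BH^{19} = \{x \in {\rm NS}\,(X)_{\R} : x^2 > 0\}/\R_{>0}$: if the attracting and repelling fixed points of $g_1$ and $g_2$ on $\partial \BH^{19}$ are pairwise distinct, then for $N \gg 0$ the pair $(g_1^{N}, g_2^{N})$ satisfies the ping--pong hypotheses and generates a free group $\Z * \Z$ inside ${\rm Ine}\,(C)$.

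The main obstacle is the first step: producing, uniformly for \emph{every} singular K3 surface and independently of the transcendental lattice $T(X)$, a single smooth rational curve $C$ that is a common section of enough elliptic fibrations with the required intersection numbers. Here I would exploit the Shioda--Inose structure, through which $X$ is related to the Kummer surface ${\rm Km}(E_1 \times E_2)$ of a product of isogenous CM elliptic curves, where the two rulings of $E_1 \times E_2$ and their translates furnish an abundance of elliptic fibrations sharing a common section arising from a fixed $2$-torsion configuration. The delicate points are (i) to arrange the classes $f_i$ to be nef and the class of $C$ to be that of a genuine irreducible $(-2)$-curve, which is a nef-cone and Weyl-chamber computation on ${\rm NS}\,(X)$ via Riemann--Roch, and (ii) to guarantee that the resulting positive entropy elements have distinct fixed points on $\partial \BH^{19}$, so that ping--pong applies. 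Once these lattice-theoretic existence and genericity statements are in place, the dynamical conclusion follows as above, in parallel with the proof of Theorem \ref{thm02} and its analogue in \cite{Og18}.
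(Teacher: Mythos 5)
Your outline---fibrewise inversions $\iota_i$ of elliptic fibrations sharing a zero section $C$, which indeed lie in ${\rm Ine}\,(C)$ since they fix $C$ pointwise, followed by a hyperbolicity/ping--pong argument on ${\rm NS}\,(X)_{\R}$---is coherent and genuinely different from the paper's route, but the two steps you yourself call delicate are exactly where the content of the theorem lives, and neither is carried out. (i) \emph{Existence.} You need, on \emph{every} singular K3 surface, one $(-2)$-curve $C$ with $C\cdot f_i=1$ for two or three pairwise non-proportional nef isotropic classes $f_i$. The route you propose via the Shioda--Inose structure does not obviously deliver this: the relation between $X$ and ${\rm Km}(E_1\times E_2)$ is only a degree-two rational map through a Nikulin-involution quotient, so elliptic fibrations and, above all, their \emph{sections} on the Kummer side do not transfer to sections of fibrations on $X$; no configuration on $X$ itself is ever pinned down. (ii) \emph{Entropy.} ``I expect this once $f_1\cdot f_2$ is large'' is not an argument. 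The $+1$-eigenlattice of each $\iota_i^*$ is in general much larger than $\langle [C],f_i\rangle$ (the inversion also fixes many fibre-component classes, and permutes others), and $\iota_1\iota_2$ fails to have positive entropy exactly when the common fixed sublattice contains a class of positive square (finite order) or when $\iota_1\iota_2$ preserves a nef isotropic class (parabolic case, i.e.\ a common genus one fibration). You must exclude both possibilities and you do not; likewise the distinctness of the boundary fixed points needed for ping--pong is only hoped for. As written, the proposal is a plan whose hard steps are deferred.

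For comparison, the paper avoids both difficulties by a different mechanism. Using ${\rm NS}\,(X)\cong U\oplus E_8^{\oplus 2}\oplus N$ and Kondo's lemma it produces an elliptic fibration with two fibres of type $II^*$ and, via the Shioda--Tate formula, two sections; joining the two $II^*$ fibres by each section yields two non-proportional nef classes $E_1,E_2$ of type $I_{12}^*$ whose supports share two meeting components $C,C'$. The curve $C$ is then a \emph{fibre component} (not a section) of two genus one fibrations of positive Mordell--Weil rank, and the inertia elements are not written down explicitly: one takes infinite-order Mordell--Weil translations fixing $C$ and a point $P\in C\cap C'$, shows the group they generate preserves no genus one fibration (the nef-and-big class $E+E_2$ argument), invokes Oguiso's Tits-alternative theorem to get $\Z*\Z$ inside ${\rm Dec}\,(X,C,P)$, and then descends to ${\rm Ine}\,(C)$ using the solvability of ${\rm Aut}\,(\BP^1,P)$ together with the Nielsen--Schreier theorem. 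That descent step is the idea your proposal is missing a substitute for: it converts abundance of automorphisms merely \emph{preserving} $C$ into a free subgroup acting trivially on $C$, without ever having to exhibit a pointwise-fixing automorphism directly.
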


Theorems \ref{thm02} and \ref{thm03} are proved in a fairly uniform way in Sections \ref{sect4}, \ref{sect5} as an application of general criteria on the existence of positive entropy element in an inertia group of a K3 surface (Theorem \ref{thm31} and Corollaries \ref{cor31}, \ref{cor32}) in Section \ref{sect2}. Proof of these criteria are based on the Tits' alternative type result for K3 surface automorphism groups (\cite{Og06}, \cite{Og07}, see also Theorem \ref{thm30}). We believe that these criteria will be also applicable for dynamical studies of other K3 surfaces. In Section \ref{sect1}, we prove some constraint of the existence of an element of positive entropy in an inertia group ${\rm Ine}\, (S, C)$ of a curve $C$ on a smooth projective surface $S$ (Theorem \ref{thm21}). This will explain one of the reasons why we may seek for the inertia group of a smooth rational curve on a K3 surface.  
\vskip 4pt
\noindent
{\bf Acknowledgements.} This work has been done during the authors' stay at KIAS  on March 2019. We would like to thank for KIAS and Professors Jun-Muk Hwang and JongHae Keum for invitation, discussions and warm hospitalities. We would like to thank Professor Igor Dolgachev for sending us a very interesting preprint \cite{AD18} by which our work is much inspired. 

\section{Existence of elements of positive entropy in inertia groups of smooth projective surfaces}\label{sect1}

We call an irreducible reduced projective curve $C$ simply a curve. We do not assume that $C$ is smooth. 
We denote by $p_a(C) = h^1(C, \hol_C) = h^0(C, \omega_C)$ the arithemetic genus of $C$ and by $g(C)$ the geometric genus of the normalization $\tilde{C}$ of $C$, i.e., $g(C) = h^1(\tilde{C}, \hol_{\tilde C}) = h^0(\tilde{C}, \omega_{\tilde{C}})$.

Let $S$ be a smooth projective surface defined over an algebraically closed field $k$ and let $C \subset S$ be a curve. 

We denote by ${\rm Bir}\, (S)$ the birational automorphism group of $S$, i.e., the group of birational selfmaps of $S$. We define the subgroups ${\rm BirDec}\, (C)$ and ${\rm BirIne}\, (C)$ of ${\rm Bir}\, (S)$ called the (birational) decomposition group and the (birational) inertia group of $C \subset S$ by 
$${\rm BirDec}\, (C) := \{f \in {\rm Bir}\, (S)\, |\, f_{*}(C) = C \}\,\, ,$$
$${\rm BirIne}\, (C) := \{f \in {\rm BirDec}\, (S)\, |\, f|_{C} = id_{C}\}\,\, .$$
Here $f_*(C)$ is the proper transform of $C$, i.e., the Zariski closure of $f(C \setminus I(f))$ where $I(f)$ is the indeterminacy locus of $f$. Note that $I(f)$ consists of at most finitely many closed points. So, the condition $f_{*}(C) = C$ and the condition $f|_{C} = id_{C}$ are well-defined conditions for $f \in {\rm Bir}\, (S)$. By definition, we have a natural group homomorphism 
$$\tilde{\rho} : {\rm BirDec}\, (C) \to  {\rm Bir}\, (C)\,\, ;\,\, f \mapsto f|_C$$
and 
$${\rm BirIne}\, (C) = {\rm Ker}\,\tilde{\rho}\,\, .$$
In particular, ${\rm BirIne}\, (C)$ is a normal subgroup of ${\rm BirDec}\, (C)$. 
We denote by ${\rm Aut}\, (S)$ the biregular automorphism group of $S$ 
and define
$${\rm Dec}\, (C) := {\rm BirDec}\, (C) \cap {\rm Aut}\, (S)\,\, ,\,\, {\rm Ine}\, (C) := {\rm BirIne}\, (C) \cap {\rm Aut}\, (S)\,\, .$$
By restricting $\tilde{\rho}$ to ${\rm Dec}\, (C)$, we obtain the group homomorphism 
$$\rho = \tilde{\rho}|_{{\rm Dec}\, (C)} : {\rm Dec}\, (C) \to {\rm Aut}\, (C)\,\, .$$
Then again, by definition,  
$${\rm Ine}\, (C) = {\rm Ker}\,\rho$$
and ${\rm Ine}\, (C)$ is also a normal subgroup of ${\rm Dec}\, (C)$. 

Coble's question (\cite[Page 245]{Co19}, see also \cite[Section 4]{AD18}) is the one asking the complexity of the actions of  
$${\rm Im}\, \tilde{\rho}\,\, , \,\,{\rm BirIne}\,(C) = {\rm Ker}\, \tilde{\rho}\,\, ,\,\, {\rm Im}\, \rho\,\, ,\,\, {\rm Ine}\, (C) = {\rm Ker}\, \rho$$ 
on $C \subset S$. 

Recall that the first dynamical degree $d_1(f)$ of $f \in {\rm Bir}\, (S)$ is a fundamental measure of the complexity of the action of the itarations $f^n$ ($n \in \Z_{\ge 0}$). It is defined by
$$d_1(f) := \lim_{n \to \infty} |(f^n)^*|_{{\rm End}_{\R}\,({\rm NS}\,(S)_{\R})}^{1/n}\,\, .$$
Here ${\rm NS}\,(S)_{\R} = {\rm NS}\,(S) \otimes \R$ and $|*|_{{\rm End}_{\R}\,({\rm NS}\,(S)_{\R})}$ is any norm of the vector space ${\rm End}_{\R}\,({\rm NS}\,(S)_{\R})$ consisiting of the linear selfmaps of ${\rm NS}\, (S)_{\R}$. By the Gromov-Yomdin's theorem, the topological entropy of $f \in {\rm Aut}\, (S)$ (for a smooth complex projective surface $S$) is given by 
$$h_{{\rm top}}(f) = \log d_1(f)\,\, .$$ 
See \cite[Pages 1637--1639]{DS05} for generalities of dynamical degrees and entropy (see also \cite{DF01} for surface case and \cite{ES13} in positive characteristic). Taking this into account, we set
$$h(f) := \log d_1(f)$$ 
also for $f \in {\rm Bir}\, (S)$ and call $h(f)$ the algebraic entropy (or just entropy) of $f$. We are particularly interested in the existence of positive entropy element of ${\rm BirIne}\, (C)$ and ${\rm Ine}\, (C)$. 

The following theorem, which should be known to the experts, shows that the existence of positive entropy element of ${\rm BirIne}\, (C)$ already poses a fairly strong constraint on the pair $C \subset S$: 

\begin{theorem}\label{thm21} Let $S$ be a smooth projective surface defined over an algebraically closed field of characteristic $0$ and let $C \subset S$ be a curve (hence irreducible and reduced by our conventions) on $S$. Assume that there exists $f \in {\rm BirIne}\, (C)$ such that $h(f) > 0$. Then, one of the following (1) or (2) holds:
\begin{enumerate}
\item $S$ is birational to a K3 surface or an Enriques surface and $C$ is a smooth rational curve; 
\item $S$ is a rational surface and $g(C) = 0$ or $1$. In particular, $C$ is either a rational curve or an elliptic curve.   
\end{enumerate}
\end{theorem}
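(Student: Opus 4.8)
The plan is to exploit that the entropy $h(f) = \log d_1(f)$ is a birational invariant, so that the hypothesis depends only on the birational class of $S$, and then to bootstrap from the classification of surfaces carrying positive-entropy dynamics. Since all the data are of finite type, by the Lefschetz principle I may assume $k = \C$. First I would sort $S$ by Kodaira dimension $\kappa(S)$. If $\kappa(S) = 2$ then ${\rm Bir}\,(S)$ is finite and no positive-entropy element exists; if $\kappa(S) = 1$ the Iitaka fibration is elliptic and canonical, hence preserved by every birational selfmap, forcing $d_1(f) = 1$; the same ``preserved-fibration'' principle eliminates bielliptic surfaces (Albanese onto an elliptic curve) and irrational ruled surfaces (Albanese onto a base curve of genus $\ge 1$). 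What survives is exactly: $S$ rational, or $S$ birational to a K3, an Enriques, or an abelian surface (alternatively one quotes Cantat's classification of surfaces with positive-entropy automorphisms and its birational counterpart). Alternatives (1) and (2) are the K3/Enriques and the rational cases, so the substance is to exclude the abelian/torus case and to pin down $C$ in each surviving case.

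To exclude the torus, pass to the minimal model $\mu \colon S \to T$, a composition of blow-downs; since ${\rm Bir}\,(T) = {\rm Aut}\,(T)$, the map $f$ becomes an automorphism $g$ of $T$ with $d_1(g) = d_1(f) > 1$. An automorphism of a complex torus is affine, so its differential is a constant linear part $M$; writing its eigenvalues on $H^{1,0}$ as $\alpha,\beta$, one has $|\alpha\beta| = 1$ (as $g$ scales the holomorphic $2$-form by $\det M$ and preserves the cup product), and positive entropy forces $|\alpha| = r > 1 > r^{-1} = |\beta|$. In particular $M$ has no eigenvalue $1$ and distinct eigenvalue moduli, so ${\rm Fix}\,(g)$ is finite and $dg$ is non-scalar and non-unipotent everywhere. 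This is incompatible with $f \in {\rm BirIne}\,(C)$: if $C$ is not $\mu$-exceptional, then $g$ would fix the curve $\mu(C)$ pointwise, impossible; and if $C$ is contracted to a (necessarily $g$-fixed) point $q$, then $g$ acts nontrivially on every exceptional curve over $q$, since the relevant multipliers are products or ratios of $\{\alpha,\beta\}$, all of modulus $\ne 1$, contradicting $f|_{C} = {\rm id}_{C}$. Hence the torus case does not occur.

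For the K3/Enriques alternative, $S$ again has a unique minimal model $X$ with ${\rm Bir}\,(X) = {\rm Aut}\,(X)$, and $f$ conjugates to $g \in {\rm Aut}\,(X)$ of positive entropy. If $C$ is $\mu$-exceptional it is already a smooth rational curve and we are done; otherwise $g$ fixes $C' := \mu(C)$ pointwise, so $g^{*}[C'] = [C']$. Now positive entropy means $g^{*}$ acts on the Lorentzian lattice ${\rm NS}\,(X)$ as a loxodromic isometry: its spectral radius $\lambda > 1$ and $\lambda^{-1}$ have isotropic eigenvectors spanning a hyperbolic plane $P$, and $P^{\perp}$ is negative definite and contains the whole eigenvalue-$1$ subspace; hence $[C']^{2} < 0$. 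Since $K_X$ is numerically trivial (zero for K3, torsion for Enriques), adjunction gives $[C']^{2} = 2 p_a(C') - 2 \ge -2$, so $[C']^{2} = -2$ and $p_a(C') = 0$: the irreducible reduced curve $C'$ is a smooth rational $(-2)$-curve. Reading $\mu \colon S \to X$ backwards as a sequence of blow-ups, $C$ is the proper transform of the smooth curve $C'$ and is therefore smooth rational as well. This yields alternative (1).

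The remaining rational case is, I expect, the \emph{main obstacle}. Here the lattice heuristic alone does not close the argument: a positive-entropy birational selfmap of a rational surface need not be birationally conjugate to an automorphism, so one cannot freely linearize the action on ${\rm NS}$, and even granting an automorphism model the invariant class yields only $[C]^{2} < 0$, while adjunction now carries the uncontrolled term $C \cdot K_X$ (Cauchy--Schwarz in $P^{\perp}$ is not enough to bound $p_a$). The plan is to invoke the theory of invariant curves for loxodromic birational surface maps (Diller--Jackson--Sommese, Cantat, Blanc--Cantat): a pointwise-fixed curve is in particular totally invariant, and for such dynamics the irreducible totally invariant curves are rational or elliptic and are supported on an anticanonical divisor, whence $p_a(C) \le 1$ and thus $g(C) \le p_a(C) \le 1$; the values $g(C) \in \{0,1\}$ correspond to a (possibly singular) rational curve and a smooth genus-one curve. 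I expect this anticanonical/invariant-curve input to be the crux, precisely because it is where the rational case departs from the numerically-trivial-canonical K3/Enriques case and requires the finer birational dynamics rather than a single linear-algebra step.
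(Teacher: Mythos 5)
Your proposal is correct and, for most of the argument, coincides with the paper's: the same reduction to abelian/K3/Enriques/rational via Kodaira dimension and the preserved-fibration principle, the same exclusion of the torus case by tracking the multipliers $|\alpha|>1>|\beta|$ down the tower of blow-ups (the paper carries out the induction you gesture at, maintaining $|a_i|>1>|b_i|$ at each stage --- be careful that ``products or ratios of $\{\alpha,\beta\}$'' taken literally includes $\alpha\beta$, which has modulus $1$; it is precisely the inductive bookkeeping that shows only ratios of numbers with distinct moduli occur), and the same $[C']^2<0$ plus adjunction argument on the minimal K3 or Enriques model. The genuine divergence is in the rational case. The paper invokes Castelnuovo's theorem in the Blanc--Pan--Vust form: a Cremona transformation fixing pointwise a curve of geometric genus $\ge 2$ is of finite order or de Jonqui\`eres, hence of zero entropy; this uses the full strength of $f|_C=\mathrm{id}_C$ and closes the case in one classical step. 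You instead invoke the Diller--Jackson--Sommese theory of invariant curves for positive-entropy birational surface maps, which only needs $f_*(C)=C$; this is a heavier dynamical input, but it buys a strictly stronger conclusion, namely that in case (2) one may replace ${\rm BirIne}\,(C)$ by ${\rm BirDec}\,(C)$. Both routes are valid; your identification of the rational case as the crux, and of the reason (the uncontrolled $C\cdot K_X$ term in adjunction), matches where the paper also has to reach for a nontrivial external theorem.
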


\begin{remark}\label{rem21} 
\begin{enumerate}
\item There exist a smooth rational surface and a smooth elliptic curve $C \subset S$ such that ${\rm Ine}\, (C)$ has an element of positive entropy. This is proved by Blanc \cite[Section 2]{Bl13}. 
\item There exist a smooth rational surface and a rational curve $C \subset S$ such that $p_a(C) \ge 2$ (hence singular) and ${\rm BirIne}\, (C)$ has an element of positive entropy. For instance, Allcock and Dolgachev \cite[Theorem 6.2]{AD18} give some explicit examples. This paper is much inspired by their examples. 
\end{enumerate} 
\end{remark}

\begin{proof} The existence of $f \in {\rm Bir}\, (S)$ with $h(f) >0$ implies that $S$ is birational to either (i) an abelian surface, (ii) a K3 surface, 
(iii) an Enriques surface or (iv) a rational surface. 

In fact, if the Kodaira dimension $\kappa (S)$ is greater than or equal to $0$, then the minimal model $S_{{\rm min}}$ of $S$ is unique up to isomorphisms and $f$ induces a biregular automorphism $f_{{\rm min}} \in {\rm Aut}\,(S_{{\rm min}})$. By the birational invariance of the dynamical degrees due to Dinh-Sibony \cite[Corollaire 7]{DS05} (See also \cite{DF01} for surface case), we have 
$$h_{{\rm top}}(f_{{\rm min}}) = h(f_{{\rm min}}) = h(f) > 0\,\, .$$ 
Therefore, if  $\kappa(S) \ge 0$, then $S_{{\rm min}}$ is a surface in (i)-(iii) by \cite[Proposition 1]{Ca99}. 

If $\kappa(S) = -\infty$, then $S$ is either a rational surface or 
a birationally ruled surface $\pi : S \to C$ over a smooth curve $C$ of $g(C) > 0$. In the second case, the fibration $\pi$ is preserved by ${\rm Bir}\, (S)$, because all rational curves on $S$ are in fibers of $\pi$ by $g(C) >0$. Therefore, in the second case, $h(f) = 0$ for all $f \in {\rm Bir}\, (S)$ by the product formula due to Dinh-Nugyen \cite[Theorem 1.1]{DN11}. 

Thus, $S$ is birational to a surface in (i) -(iv).

First we consider the cases (i), (ii), (iii). 

Let $\pi : S \to T := S_{{\rm min}}$ be the minimal model of $S$. Then $\pi$ is a composition of blowings down of $(-1)$-curves. As remarked above, $f \in {\rm Bir}\, (S)$ descends to $f_T \in {\rm Aut}\, (T)$ equivariantly with respect to $\pi$ 
and $h(f_T) = h(f) > 0$. Set $C_T := \pi(C)$. 

Assume first that $C_T$ is a point. Then $C$ is one of the exceptional curves of $\pi$. Therefore $C \simeq \BP^1$ on $S$. 

We show that $T$ is not an abelian surface in this case. 
Assuming otherwise, we choose $C_T =O$ as the origin of $T$. We may assume without loss of generality that $T$ is a complex abelian surface, hence it is a complex $2$-torus $T = \C^2/\Lambda$ and that the exceptional locus of $\pi : S \to T$ is connected. The automorphism $f_T$ is then a group automorphism, represented by a linear $2 \times 2$-matrix, say $A$, with respect to the natural holomorphic coordinates $(z_1, z_2)$ of the universal covering space $\C^2$. We denote by $\alpha$ and $\beta$ the eigenvalues of $A$ and arrange so that $|\alpha| \ge |\beta|$. Let $\omega_T$ is a nowhere vanishing holomorphic $2$-form on $T$. Then $f_T^*\omega_T = \alpha\beta\omega_T$. Hence $|\alpha \beta| = 1$ by the finiteness of the canonical representation of a smooth complex projective variety (\cite[Theorem 14.10]{Ue75}). The positivity of the entropy says that the spectral radius of $f_T^*|_{H^{1, 1}(T)}$ is strictly greater than $1$. Hence $|\alpha| >1$ and therefore 
$$|\alpha| > 1 > |\beta|\,\, .$$ 
Let $E_1$ be the exceptional curve of the first blow-up $T_1 \to T$ at $O$ in $\pi$. Then the action of $f_{T_1}$ on $T_1$ of $f$ preserves $E_1$ and has a fixed point $P_1$ over which our $C$ lies. By the property of the blow up, 
the action of $f_{T_1}|_{E_1}$ is either one of: 
$$x_1 \mapsto \frac{\alpha}{\beta} \cdot x_1\,\, ,\,\, x_1 \mapsto \frac{\beta}{\alpha} \cdot x_1$$ 
under a suitable affine coordinate $x_1$ of $E_1$ at $P_1$. More precisely, the action $f_{T_1}$ at $P_1 \in T_1$ is biregular and of the form $(x_1, y_1) \mapsto (a_1x_1, b_1y_1)$ which is either 
one of: 
$$(x_1, y_1) \mapsto (\frac{\alpha}{\beta} x_1, \beta y_1)\,\, ,\,\, (x_1, y_1) \mapsto (\frac{\beta}{\alpha} x_1, \alpha y_1)$$ 
under suitable local coordinates $(x, y)$ of $T_1$ at $P_1$ such that $E_1 = (y_1=0)$ at $P_1$. Here we have still either $|a_1| > 1 > |b_1|$ or $|b_1| > 1 > |a_1|$. Let $E_2$ be the exceptional curve of the second blow-up $T_2 \to T_1$ at $P_1$ in $\pi$. Then the action of $f_{T_2}$ on $T_2$ of $f$ preserves $E_2$ and has a fixed point $P_2$ over which our $C$ lies. For the same reason as above, the action $f_{T_2}$ at $P_2 \in T_2$ is biregular and of the form $(x_2, y_2) \mapsto (a_2x_2, b_2y_2)$ which is either 
one of: 
$$(x_2, y_2) \mapsto (\frac{a_1}{b_1} x_2, b_2 y_2)\,\, ,\,\, (x_2, y_2) \mapsto (\frac{b_1}{a_1} x_2, a_1 y_2)$$ 
under suitable local coordinates $(x_2, y_2)$ of $T_2$ at $P_2$ such that $E_2 = (y_2=0)$ at $P_1$. So, the same condition either $|a_2| > 1 > |b_2|$ or $|b_2| > 1 > |a_2|$ still holds. Now one can repeat this process inductively by reaching the stage that $C$ appears as the exceptional curve $E_n$ of the blow up $T_n \to T_{n-1}$ in $\pi$. The induced action of $f_{T_n}$ on $T_n$ of $f$ preserves $E_n$ and $f_{T_n}|_{E_n}$ is then the multiplication by either 
$$\frac{a_{n-1}}{b_{n-1}}\,\, ,\,\, \frac{b_{n-1}}{a_{n-1}}\,\, .$$ 
However, by induction, we see that either $|a_{n-1}| > 1 > |b_{n-1}|$ or $|b_{n-1}| > 1 > |a_{n-1}|$ holds, so that $f_{T_n}|_{E_n} \not= id_{E_n}$. However, this contradicts to the fact that $f|_{C} = id_{C}$. Indeed, the actions of $f_{T_n}$ on the generic scheme point of $E_n$ and the action of $f$ on the generic scheme point of $C$ have to be the same. So, $T$ is not an abelian surface when $C_T$ is a point. 

Assume that $C_T$ is a curve. Then $f_T^{*}(C_T) = C_T$ in ${\rm NS}\, (T)$. 
As $h(f_T) > 0$, it follows that $C_T^2 < 0$. Here we used the fact that $f_T^*$ is of finite order if $C_T^2 > 0$ and the eigenvalues of $f_T^*$ are all on the unit circle $S^1$ if $C_T^2 = 0$ and $C_T$ is non-zero effective 
(See eg. \cite[Lemma 2.8]{Og07}). 

Since there is no curve with negative self-intersection on an abelian surface, $T$ is not an abelian surface, either. On the other hand, on a K3 surface and an Enriques surface, any curve with negative self-intersection is exactly a $(-2)$-curve and it is isomorphic to $\BP^1$. Hence $C_T \simeq \BP^1$ on $T$ and hence $C \simeq \BP^1$ when $S$ is birational to a K3 surface or an Enriques surface.  

We now consider the case where $S$ is a rational surface. Since the statement and the conclusion are birationally invariant ones, we may assume without loss of generality that $S = \BP^2$. If $g(C) \ge 2$ and $f \in {\rm BirIne}(C)$, then, by Castelnouvo's theorem (See \cite[Th\'eor\`eme 1.1]{BPV08} for the statement and a modern proof), $f$ is either of finite order or a de Jongqui\`eres transformation, which is a birational self map of $\BP^2$ preserving a pencil of rational curves. However, then $h(f) = 0$, respectively by the definition of $h(f)$ and by the product formula as above, a contradiction to $h(f) > 0$. Hence $g(C) \le 1$ as claimed. 

This completes the proof of Theorem \ref{thm21}.
\end{proof}

\section{A criterion of the existence of elements of positive entropy in inertia groups of K3 surfaces}\label{sect2}

Our main results of this section are Theorem \ref{thm31} and Corollaries \ref{cor31} and \ref{cor32}. 

A group $G$ is called almost abelian, if $G$ is isomorphic to an abelian group up to finite kernel and finite cokernel. More precisely, a group $G$ is called almost abelian if there exist a normal subgroup $G^{(0)}$ of $G$ such that $[G:G^{(0)}] < \infty$ and a finite normal subgroup $K < G^{(0)}$ such that the quotient group $G^{(0)}/K$ is an abelian group. We call $G$ almost abelian of 
rank $r$ 
if in addition that we can make $G^{(0)}/K \simeq \Z^{\oplus r}$. The rank $r$ is well-defined (See eg. \cite[Section 8]{Og08}).

\begin{definition}\label{def}
Let $S$ be a smooth projective surface and let $\phi : S \to B$ be a surjective morphism to a smooth projective curve $B$ with connected fibers. We call $\phi$ a genus one fibration if general fibers are of arithmetic genus one. We call $\phi$ an elliptic fibration if general fibers are smooth elliptic curve and $\phi$ admits a global section.  
\end{definition}

Recall from \cite[Theorem 1.1]{Og06} and \cite[Theorem 1.3]{Og07} 
the following:

\begin{theorem}\label{thm30} 
Let $S$ be a projective K3 surface defined over an algebraically closed field $k$ of characteristic $p \not= 2, 3$. 
Let $G$ be any subgroup of ${\rm Aut}\, (S)$. Then:

\begin{enumerate}
\item Either (i) $G$ is an almost abelian group, necessarily of finite rank, or (ii) $G$ contains a subgroup isomorphic to the non-commutative free group $\Z * \Z$, and the two cases (i) and (ii) are exclusive each other. 

\item $G$ has an element of positive entropy in the case (ii). 

\item In particular, if $G$ has no element of positive entropy, then $G$ is almost abelian, i.e., $G$ belongs to the case (i) (by (1) and (2)). Moreover, if $|G| = \infty$, then $G$ has no element of positive entropy if and only if $G$ preserves a genus one fibration $S \to \BP^1$.
\end{enumerate} 
\end{theorem}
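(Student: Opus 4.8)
The plan is to transfer the entire problem to the linear action of $G$ on the hyperbolic lattice ${\rm NS}\,(S)$ and to run a Tits-alternative/ping-pong argument in the associated real hyperbolic space. First I would record the two structural inputs that make this reduction legitimate. By the Hodge index theorem ${\rm NS}\,(S)$ has signature $(1, \rho(S)-1)$, so the projectivized positive cone is a model of hyperbolic space $\BH$ of dimension $\rho(S)-1$, on which ${\rm Aut}\,(S)$ acts through the natural representation $r : {\rm Aut}\,(S) \to {\rm O}({\rm NS}\,(S))$. The kernel of $r$ is finite (in characteristic $0$ this follows from the Torelli theorem together with the fact that the action on the transcendental lattice fixes the line $\C\omega_X$ up to a root of unity; for $p \neq 2,3$ one invokes the corresponding finiteness in \cite{Og06}, \cite{Og07}). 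Since $f^{*}$ preserves the line $\C\omega_X$ and is a real isometry, all its eigenvalues on the transcendental part have modulus $1$; hence $d_1(f)$ equals the spectral radius of $r(f)$ on ${\rm NS}\,(S)_{\R}$, and $f$ has positive entropy precisely when $r(f)$ is a hyperbolic (loxodromic) isometry of $\BH$, while elliptic and parabolic isometries give $h(f)=0$. Thus it suffices to analyze the image $\bar G := r(G) < {\rm O}({\rm NS}\,(S))$ and to track positive entropy through its hyperbolic elements.

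Next I would establish the dichotomy (1)--(2) by ping-pong on $\partial\BH$. Suppose $\bar G$ contains a hyperbolic element $g$ with fixed points $\{p^{+},p^{-}\}\subset\partial\BH$. Either every element of $\bar G$ preserves the set $\{p^{+},p^{-}\}$, which is the elementary case treated below, or some $h\in\bar G$ moves it; after replacing $g$ and $hgh^{-1}$ by high powers the four attracting/repelling boundary points become pairwise distinct, and the classical ping-pong lemma applied to small disjoint neighborhoods of them produces a free subgroup $\langle g^{N}, h g^{N} h^{-1}\rangle \cong \Z * \Z$, which lifts to $\Z * \Z$ in $G$ because $\ker r$ is finite. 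This gives case (ii), and since the free generators are hyperbolic it also proves (2). Exclusivity of (i) and (ii) is immediate: a group containing $\Z * \Z$ is not virtually solvable, hence not almost abelian.

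Finally I would treat the ``no hyperbolic element'' case to obtain almost abelianity and the genus-one criterion in (3). Here every element of $\bar G$ is elliptic or parabolic, so by the classification of subgroups of ${\rm Isom}(\BH)$ (equivalently, its limit set has at most one point) $\bar G$ fixes a point of $\overline{\BH}=\BH\cup\partial\BH$. If this point is interior, then $\bar G$ lies in a conjugate of the compact ${\rm O}(\rho-1)$ intersected with the discrete group ${\rm O}({\rm NS}\,(S))$, hence is finite, and so is $G$. If $|G|=\infty$, then $\bar G$ fixes a boundary point, i.e.\ an isotropic ray $\R_{\ge 0}e$ in the closure of the positive cone; the stabilizer of such a ray in an arithmetic hyperbolic group is almost abelian of finite rank (a finite ``rotation'' part extended by a lattice of unipotent translations), whence $G$ is almost abelian of finite rank by finiteness of $\ker r$. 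To identify this with a fibration I would argue that the fixed ray is rational by uniqueness (so Galois-invariant), spanned by a primitive nef class $e$ with $e^{2}=0$; on a K3 surface $|e|$ is then base-point free and defines $\phi_e : S \to \BP^{1}$ with fibers of arithmetic genus one, the base being $\BP^{1}$ since $S$ is algebraically simply connected. As $\bar G$ fixes $e$, $G$ preserves $\phi_e$. Conversely, if $G$ preserves a genus one fibration $S\to\BP^{1}$ it fixes the nef isotropic fiber class, so no $r(f)$ is hyperbolic and every element has zero entropy, giving the stated equivalence.

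The hard part, and the point where the characteristic restriction and the K3 hypothesis are genuinely used, is the last step: proving that the $\bar G$-fixed boundary point is rational and nef so that it truly comes from a genus one fibration, and verifying that the stabilizer of an isotropic ray in ${\rm O}({\rm NS}\,(S))$ is almost abelian of finite rank. The ping-pong step, by contrast, is formal once the boundary dynamics of hyperbolic isometries are in hand.
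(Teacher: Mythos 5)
You should first be aware that the paper does not prove Theorem \ref{thm30} at all: it is recalled verbatim from \cite[Theorem 1.1]{Og06} and \cite[Theorem 1.3]{Og07}, with Remark \ref{rem30} only noting that the proofs there go through in characteristic $p\neq 2,3$. Your sketch is, in outline, exactly the argument of those cited references -- reduce to the isometric action on the hyperbolic lattice ${\rm NS}\,(S)$ via the finiteness of ${\rm Ker}\,(r)$, identify positive entropy with loxodromic isometries, run ping-pong on $\partial\BH$ in the non-elementary case, and analyse the stabilizer of a rational isotropic nef class in the elementary infinite case -- so there is no methodological divergence to report.

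Within the sketch itself, a few loose ends are worth flagging. First, you defer the case where $\bar G$ contains a loxodromic element but globally preserves its fixed pair $\{p^+,p^-\}$ to "the elementary case treated below," but the final paragraph only treats the case with no loxodromic element at all; you still need the (easy) observation that the stabilizer of a geodesic in a discrete subgroup of ${\rm O}({\rm NS}\,(S))$ is virtually cyclic, hence almost abelian of rank $\le 1$. Second, in the converse direction of (3), "fixes the nef isotropic fiber class, so no $r(f)$ is hyperbolic" needs the standard but nontrivial fact that the isotropic eigenlines of a loxodromic lattice isometry are exactly the two eigenlines for the Salem eigenvalues $\lambda^{\pm 1}$ and are therefore irrational, so they cannot contain an integral fiber class. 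Third, the rationality and nefness of the $\bar G$-fixed boundary ray (via quasi-unipotence of parabolic lattice isometries and the closedness of the nef cone under $\lim_n (g^n)^*h/\|(g^n)^*h\|$ for $h$ ample) is indeed the substantive step, and you correctly identify it as such rather than proving it; for a self-contained argument one would have to supply it, or simply cite \cite{Og06}, \cite{Og07} as the paper does.
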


\begin{remark}\label{rem30}
Statements of \cite[Theorem 1.1]{Og06} and \cite[Theorem 1.3]{Og07} are formulated over $\C$. However, proofs there are valid without any change over any algebraically closed field $k$ under the assumption that $p \not= 2$, $3$. The assumption $p \not= 2$, $3$ is used only to guarantee that a general fiber of a genus one fibration is a smooth elliptic curve and that the sum of Euler numbers of singular fibers is exactly $24$, the Euler number of a K3 surface, to deduce that a genus one fibration has always at least three singular fibers. 
\end{remark}

In this section, we prove the following:

\begin{theorem}\label{thm31} 
Let $S$ be a projective K3 surface defined over an algebraically closed field $k$ of characteristic $p \not= 2$, $3$ 
and let $C \subset S$ be 
a smooth rational curve. 
Assume that ${\rm Dec}\, (C)$ is not almost abelian and that $|{\rm Ine}\, (C)| = \infty$. Then ${\rm Ine}\,(C)$ contains an element of positive entropy.
\end{theorem}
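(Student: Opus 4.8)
The plan is to argue by contradiction, playing off the Tits-alternative dichotomy of Theorem \ref{thm30} against the normality of ${\rm Ine}\, (C)$ in ${\rm Dec}\, (C)$. First I would extract what the hypothesis on ${\rm Dec}\, (C)$ provides: since ${\rm Dec}\, (C)$ is not almost abelian, Theorem \ref{thm30} (1) puts it in case (ii), and then Theorem \ref{thm30} (2) gives an element of positive entropy in ${\rm Dec}\, (C)$; in particular $|{\rm Dec}\, (C)| = \infty$. Now assume, toward a contradiction, that ${\rm Ine}\, (C)$ contains no element of positive entropy. Since $|{\rm Ine}\, (C)| = \infty$ by hypothesis, Theorem \ref{thm30} (3) applies to $G = {\rm Ine}\, (C)$ and produces a genus one fibration $\phi : S \to \BP^1$ preserved by ${\rm Ine}\, (C)$. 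Writing $[F] \in {\rm NS}\, (S)$ for its primitive isotropic nef fiber class, this means $g^*[F] = [F]$ for every $g \in {\rm Ine}\, (C)$.

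The key intermediate step is the \emph{uniqueness} of $\phi$ as an ${\rm Ine}\, (C)$-invariant fibration. By Theorem \ref{thm30} (3) the group ${\rm Ine}\, (C)$ is almost abelian, and being infinite it has rank $\ge 1$, so it contains an element $g_0$ of infinite order; by the contradiction hypothesis $g_0$ has zero entropy. Hence $g_0^*$ acts on the hyperbolic lattice ${\rm NS}\, (S)$ as an isometry of spectral radius $1$ and infinite order, so it is parabolic and fixes exactly one isotropic ray in the closure of the positive cone (compare \cite[Lemma 2.8]{Og07}). Any ${\rm Ine}\, (C)$-invariant genus one fibration has a primitive isotropic nef fiber class fixed by $g_0^*$, hence equal to the primitive generator of that single ray; so the fiber class, and with it the fibration, is determined by ${\rm Ine}\, (C)$ alone.

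I would then propagate this invariance to all of ${\rm Dec}\, (C)$ via normality. Fix $h \in {\rm Dec}\, (C)$. For each $g \in {\rm Ine}\, (C)$ we have $hgh^{-1} \in {\rm Ine}\, (C)$, so $(hgh^{-1})^*[F] = [F]$, which rearranges to $g^*(h^*[F]) = h^*[F]$. Thus $h^*[F]$ is again a primitive isotropic nef class fixed by every element of ${\rm Ine}\, (C)$, so by the uniqueness above $h^*[F] = [F]$. As $h$ ranges over ${\rm Dec}\, (C)$, this shows ${\rm Dec}\, (C)$ preserves $\phi$. But ${\rm Dec}\, (C)$ is infinite, so the reverse implication in Theorem \ref{thm30} (3) forces ${\rm Dec}\, (C)$ to have no element of positive entropy, contradicting the first paragraph. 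Therefore ${\rm Ine}\, (C)$ must contain an element of positive entropy.

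The hard part is the uniqueness claim of the second paragraph: the whole argument rests on the fact that an infinite zero-entropy subgroup of ${\rm Aut}\, (S)$ preserves a \emph{unique} genus one fibration. This should follow from the standard theory of parabolic isometries of ${\rm NS}\, (S)$ --- an infinite-order, zero-entropy automorphism acts parabolically and so fixes a single boundary point --- but some care is needed to verify that the nef fiber class is precisely the primitive generator of that fixed ray, and that ``preserving $\phi$'' is genuinely equivalent to fixing $[F]$ in ${\rm NS}\, (S)$. Note also that the smoothness and rationality of $C$ are not used directly here; only the normality ${\rm Ine}\, (C) \triangleleft {\rm Dec}\, (C)$ and the hypotheses on the two groups enter.
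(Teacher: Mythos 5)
Your proof is correct, and its overall skeleton (argue by contradiction, apply Theorem \ref{thm30} (3) to ${\rm Ine}\,(C)$ to get an invariant genus one fibration, then use normality of ${\rm Ine}\,(C)$ in ${\rm Dec}\,(C)$ to force ${\rm Dec}\,(C)$ to preserve that fibration, contradicting the non--almost-abelian hypothesis) is exactly the paper's. The one place you diverge is the step showing $h^*[F]=[F]$ for all $h\in{\rm Dec}\,(C)$. You establish this by proving \emph{uniqueness} of the ${\rm Ine}\,(C)$-invariant fibration: an infinite-order zero-entropy element acts parabolically on ${\rm NS}\,(S)$ and fixes a single isotropic boundary ray, so any invariant primitive isotropic nef class must be the generator of that ray. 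The paper instead avoids uniqueness altogether: from normality, ${\rm Ine}\,(C)$ preserves both $F$ and $h^*F$, and if these classes were distinct their sum would be an ${\rm Ine}\,(C)$-invariant nef and big class, forcing the action of ${\rm Ine}\,(C)$ on ${\rm Pic}\,(S)$ --- and hence ${\rm Ine}\,(C)$ itself --- to be finite (\cite[Lemma 2.8]{Og07}), contradicting $|{\rm Ine}\,(C)|=\infty$. The two mechanisms are close cousins (both come from the classification of isometries of a hyperbolic lattice), but the paper's version is more economical: it needs only the ``invariant big nef class implies finite action'' lemma it already cites, whereas your route additionally requires that an infinite-order automorphism acts with infinite order on ${\rm NS}\,(S)$ and the precise statement that a parabolic isometry fixes exactly one isotropic ray --- the point you yourself flag as the hard part. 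Both arguments are valid, and you are right that smoothness and rationality of $C$ play no role at this stage.
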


\begin{proof}
If ${\rm Ine}\, (C)$ has no element of positive entropy, then, since $|{\rm Ine}\, (C) | = \infty$, the group ${\rm Ine}\, (C)$ preserves a genus one fibration $\Phi_{|F|} : S \to \BP^1$ and ${\rm Ine}\, (C)$ is almost abelian of positive finite rank by Theorem \ref{thm30} (3). Here $F$ be a general fiber of 
$\Phi_{|F|}$. 

Let $g \in {\rm Dec}\, (C)$. We have 
$$g^{-1} {\rm Ine}\, (C) g = {\rm Ine}\, (C)\,\, ,$$
because ${\rm Ine}\, (C)$ is a normal subgroup of ${\rm Dec}\, (C)$. 
Thus, ${\rm Ine}\, (C)$ also preserves a genus one fibration $\Phi_{|g^*F|} : S \to \BP^1$. 

Assume that there is $g \in {\rm Dec}\, (C)$ such that $g^*F \not= F$ in ${\rm Pic}\, (S) \simeq {\rm NS}\, (S)$. Then the action of ${\rm Ine}\, (C)$ on ${\rm Pic}\, (S)$ has to preserve the class $g^*F + F$. Then, since
$$(g^*F + F) ^2 = 2(g^*F.F) >0\,\, ,$$ 
the action of ${\rm Ine}\, (C)$ on ${\rm Pic}\, (S)$ is finite (See eg. \cite[Lemma 2.8]{Og07}), hence the group ${\rm Ine}\, (C)$ is a finite group, a contradiction.

Now we may assume that  $g^*F = F$ in ${\rm Pic}\, (S) \simeq {\rm NS}\, (S)$ for all $g \in {\rm Dec}\, (C)$. Then the group ${\rm Dec}\, (C)$ preserves a genus one fibration $\Phi_{|F|} : S \to \BP^1$. However, then ${\rm Dec}\, (C)$ is almost abelian of finite rank by Theorem \ref{thm30} (3), a contradiction to the assumption that ${\rm Dec}\, (C)$ is not almost abelian. 

Hence there is an element $f \in {\rm Ine}\, (C)$ such that $h(f) > 0$, as claimed. 
\end{proof}
We denote ${\rm Dec}\, (S, C, P) := \{g \in {\rm Dec}\, (C)\, |\, g(P) = P\}$ for 
$P \in C \subset S$ for a smooth projective surface $S$, a curve $C \subset S$ and a closed point $P \in C$. Then ${\rm Dec}\,(S, C, P)$ is a subgroup of ${\rm Dec}\, (C)$ and ${\rm Ine}\,(C)$ is a subgroup of ${\rm Dec}\, (S, C, P)$. 

\begin{corollary}\label{cor31} 
Let $S$ be a projective K3 surface defined over 
an algebraically closed field $k$ of characteristic $p \not= 2, 3$ 
and let $C \subset S$ be 
a smooth rational curve. 
Assume that ${\rm Dec}\, (S, C, P)$ is not almost abelian for some point $P \in C$. Then ${\rm Ine}\,(C)$ contains a non-commutative free subgroup isomorphic to $\Z * \Z$ and an element of positive entropy.
\end{corollary}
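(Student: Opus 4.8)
The plan is to transfer a non-commutative free group, which Theorem~\ref{thm30} guarantees inside ${\rm Dec}\,(S, C, P)$, down into the inertia subgroup ${\rm Ine}\,(C)$, exploiting that the relevant quotient is solvable together with the rigidity of normal subgroups of free groups. Since by hypothesis $G := {\rm Dec}\,(S, C, P) \le {\rm Aut}\,(S)$ is not almost abelian, Theorem~\ref{thm30}~(1) places it in case (ii), so it contains a subgroup $F \simeq \Z * \Z$. The key structural observation is that the restriction $\rho|_G$ has solvable image: because $C$ is a smooth rational curve, ${\rm Aut}\,(C) = {\rm PGL}\,(2, k)$, and every element of $G$ fixes the point $P$, so $\rho(G)$ lands in the stabilizer of $P$, which is the group of affine transformations $x \mapsto ax + b$ of the line and hence solvable (indeed metabelian). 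Since ${\rm Ine}\,(C) = {\rm Ker}\,\rho$ and ${\rm Ine}\,(C) \subseteq G$, we have ${\rm Ine}\,(C) = {\rm Ker}\,(\rho|_G)$.

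Next I would examine $N := F \cap {\rm Ine}\,(C) = {\rm Ker}\,(\rho|_F)$, a normal subgroup of $F$. The quotient $F/N \simeq \rho(F)$ is solvable, whereas $F \simeq \Z * \Z$ is not solvable; hence $N \not= \{1\}$. A nontrivial normal subgroup of the non-abelian free group $F$ cannot be infinite cyclic (normality together with the description of centralizers in free groups would force $F$ to be abelian), so by the Nielsen--Schreier theorem $N$ is free of rank $\ge 2$ and in particular contains a copy of $\Z * \Z$. As $N \subseteq {\rm Ine}\,(C)$, this already exhibits the desired non-commutative free subgroup inside ${\rm Ine}\,(C)$.

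Finally I would apply Theorem~\ref{thm30} a second time, now to $G' := {\rm Ine}\,(C) \le {\rm Aut}\,(S)$: having just produced a subgroup isomorphic to $\Z * \Z$, the group $G'$ lies in case (ii), so by part (2) it contains an element of positive entropy. This yields both assertions of the corollary simultaneously. An alternative for the positive-entropy half would route through Theorem~\ref{thm31}, after first checking $|{\rm Ine}\,(C)| = \infty$ by the same torsion-freeness argument applied to $N$; but producing $\Z * \Z$ directly is cleaner and delivers the free subgroup at once.

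I expect the only real obstacle to be the middle, purely group-theoretic step: one must be sure that the kernel $N$ is not merely nontrivial but genuinely non-abelian free, which rests on the structure theory of free groups rather than on anything about $S$. The geometric input reduces to the elementary fact that a point stabilizer in ${\rm PGL}\,(2, k)$ is solvable, and the two invocations of Theorem~\ref{thm30} are essentially bookkeeping.
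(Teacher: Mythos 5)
Your proposal is correct and follows essentially the same route as the paper's proof: extract $\Z * \Z$ from ${\rm Dec}\,(S,C,P)$ via Theorem~\ref{thm30}~(1), observe that the point stabilizer in ${\rm Aut}\,(C)$ is the solvable affine group so the kernel of the restriction is a nontrivial, non-cyclic normal subgroup, invoke Nielsen--Schreier to see it is free of rank $\ge 2$, and then apply Theorem~\ref{thm30}~(2) for the positive-entropy element. The only cosmetic difference is that you rule out a cyclic kernel via centralizers in free groups, whereas the paper notes that a $\{0\}$ or $\Z$ kernel would make the group solvable; both are valid.
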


\begin{proof}
Note that ${\rm Aut}\, (C, P) := \{f \in {\rm Aut}\, (C)\,|\, f(P) = P\}$
is isomorphic to the group of affine linear transformations $f(z) = az + b$ ($a \in k^{\times}$, $b \in k$) of 
the affine line $k$. In particular, ${\rm Aut}\, (C, P)$ fits in with the exact sequence
$$0 \to k \to {\rm Aut}\, (C, P) \to k^{\times} \to 1\,\, .$$
Therefore ${\rm Aut}\, (C, P)$ is solvable, as so are $k$ and 
$k^{\times}$ (indeed, both are abelian groups). 
From the natural representaion $\rho : {\rm Dec}\, (S, C, P) \to {\rm Aut}\, (C, P)$ defined by $f \mapsto f|_C$, we obtain the exact sequence
$$1 \to {\rm Ine}\, (C) \to {\rm Dec}\, (S, C, P) \to {\rm Im}\, \rho \to 1\,\, .$$ Since ${\rm Dec}\, (S, C, P)$ is not almost abelian, ${\rm Dec}\, (S, C, P)$ contains a subgroup $G$ isomorphic to $\Z * \Z$ by Theorem \ref{thm30} (1). The group $\rho(G)$ is solvable, as it is a subgroup of the solvable group ${\rm Aut}\, (C, P)$. Since $G$ is not solvable, it follows then that ${\rm Ker}\, (\rho|_G)$ is not isomorphic to $\{0\}$ nor $\Z$. On the other hand, since $G$ is a free group, the subgroup ${\rm Ker}\, (\rho|_G)$ is also a free group by the Nielsen-Schreier theorem. Thus ${\rm Ker}\, (\rho|_G)$, which is a free group other than $\{0\}$ and $\Z$, has a subgroup isomorphic to $\Z * \Z$. Since ${\rm Ker}\, (\rho|_G) \subset {\rm Ine}\, (C)$, it follows that ${\rm Ine}\, (C)$ also contains a free subgroup isomorphic to $\Z * \Z$. Hence, by Theorem \ref{thm30} (2), ${\rm Ine}\, (C)$ has an element of positive entropy. 
\end{proof}

\begin{remark}\label{rmk:35}
Let $\Phi_{|F|} : S \to \BP^1$ be a genus one fibration on a K3 surface over an algebraically closed field of characteristic $p \not= 2$, $3$. The associated Jacobian fibration $J(\Phi_{|F|}) : J(S) \to \BP^1$ is defined by the relatively minimal model of the compactification of the Jacobian ${\rm Pic}^{0}(S_{\eta})$ of the scheme generic fiber $S_{\eta}$ of $\Phi_{|F|}$. Then $J(\Phi_{|F|})$ is an elliptic fibration and
$${\rm MW}\,(J(\Phi_{|F|})) := S_{\eta}(k(\BP^1))$$ 
forms a finitely generated abelian group called the Mordell-Weil group of 
$J(\Phi_{|F|})$ (See \cite{Shi90} for the basic properties of Mordell-Weil groups). The Mordell-Weil group $ {\rm MW}\,(J(\Phi_{|F|}))$ faithfully acts on $\Phi_{|F|} : S \to \BP^1$ over $\BP^1$ through the translation action of ${\rm Pic}^{0}(S_{\eta})$ on $S_{\eta}$. Note also that ${\rm MW}\, (J(\Phi_{|F|}))$ is a finite index abelian subgroup of the group ${\rm Aut}\,(S)_{|F|}$ in \cite[Section 2.2]{Yu18}. This is because $\Phi_{|F|}$ has at least three singular fibers, so that $|{\rm Im}\,({\rm Aut}\,(S)_{|F|} \to {\rm Aut}\, (\BP^1))| < \infty$, and also $|{\rm Aut}_{{\rm group}}\,(S_{\overline{\eta}})| < \infty$ as $S_{\overline{\eta}}$ is a smooth elliptic curve (cf. Remark \ref{rem30}). 

In what follows, we denote ${\rm MW}\,(J(\Phi_{|F|}))$ simply by ${\rm MW}\, (\Phi_{|F|})$ and call the Mordell-Weil group of $\Phi_{|F|}$ whenever we regard 
${\rm MW}\,(J(\Phi_{|E|})) \subset {\rm Aut}\,(S)$ in the way explained here.

\end{remark}

The next corollary will be frequently used in Sections \ref{sect4} and \ref{sect5}. 

\begin{corollary}\label{cor32} 
Let $S$ be a projective K3 surface defined over an algebraically closed field $k$ of characteristic $p \not= 2$, $3$. Assume that there exist a smooth rational curve $C$, smooth rational curves $R_1$, $R_2$, possibly $R_1 = R_2$, and effective divisors $D_1$ and $D_2$, possibly $0$, such that 
\begin{enumerate}
\item The complete linear system $|E_i|$, where 
$$E_i := D_i + a_iR_i + b_iC$$ 
with suitable positive integers $a_i > 0$ and $b_i >0$ is free and defines a genus one fibration 
$$\Phi_i := \Phi_{|E_i|} : S \to \BP^1$$ 
of positive Mordell-Weil rank for $i = 1$ and $2$. 
\item Moreover, $\Phi_i$ ($i=1$, $2$) are different genus one fibrations, that is, the two classes $E_i$ ($i=1$, $2$) are not proportional in ${\rm Pic}\, (X) \simeq {\rm NS}\, (X)$. 
\item $C \not= R_1$, $C \not= R_2$ and $C \cap R_1 \cap R_2 \not= \emptyset$. 
\end{enumerate}
Then ${\rm Ine}\,(C)$ contains a non-commutative free subgroup isomorphic to $\Z * \Z$ and an element of positive entropy.
\end{corollary}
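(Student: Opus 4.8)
The plan is to find a point $P \in C$ for which ${\rm Dec}\, (S, C, P)$ is not almost abelian, and then to quote Corollary \ref{cor31}. The natural candidate is any point $P \in C \cap R_1 \cap R_2$, which is nonempty by hypothesis (3) (and $P \in C \cap R_i$ with $C \not= R_i$).

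First I would unwind the geometric content of hypothesis (1). Since $\Phi_i = \Phi_{|E_i|}$ has connected fibers, the pencil $|E_i|$ is exactly the pencil of fibers, so every effective member of $|E_i|$ is a single fiber; in particular the effective divisor $D_i + a_i R_i + b_i C$ is one fiber $F_i^{(0)}$ of $\Phi_i$. As $a_i, b_i > 0$, both $C$ and $R_i$ are components of $F_i^{(0)}$, and $P \in C \cap R_i$ is a node of $F_i^{(0)}$. Next I would produce, for each $i$, an infinite-order element of ${\rm Dec}\, (S, C, P)$. By Remark \ref{rmk:35}, ${\rm MW}\,(\Phi_i)$ acts on $S$ by translations preserving $\Phi_i$. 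Passing to the finite-index subgroup of sections whose image is trivial in the (finite) component group of every reducible fiber, these translations fix each component of each fiber setwise; hence they fix $C$, fix $R_i$, and fix their intersection point $P$. Because ${\rm MW}\,(\Phi_i)$ has positive rank, this finite-index subgroup still contains an element $g_i$ of infinite order, and $g_i \in {\rm Dec}\, (S, C, P)$.

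The heart of the argument is a uniqueness statement. Each $g_i$ preserves the fibration $\Phi_i$, so $h(g_i) = 0$ by Theorem \ref{thm30} (3); since ${\rm Aut}\,(S) \to {\rm O}\,({\rm NS}\,(S))$ has finite kernel, $g_i^*$ is an infinite-order isometry of the Lorentzian lattice ${\rm NS}\,(S)$ of spectral radius $1$. Such an isometry is parabolic and therefore fixes a unique isotropic ray, which must be $\R_{\ge 0} E_i$. Now suppose for contradiction that ${\rm Dec}\, (S, C, P)$ is almost abelian. It is infinite (it contains $g_1$), so by Theorem \ref{thm30} (3) it preserves a genus one fibration with nef isotropic fiber class $F$. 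Then $g_i^* F = F$ for $i = 1, 2$, and uniqueness of the fixed isotropic ray of the parabolic $g_i^*$ forces $F \parallel E_1$ and $F \parallel E_2$, whence $E_1 \parallel E_2$, contradicting hypothesis (2).

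Hence ${\rm Dec}\, (S, C, P)$ is not almost abelian, and Corollary \ref{cor31} gives that ${\rm Ine}\,(C)$ contains a non-commutative free subgroup isomorphic to $\Z * \Z$ and an element of positive entropy. The step I expect to demand the most care is checking that the chosen Mordell--Weil translations genuinely \emph{fix the point} $P$, not merely preserve $C$ setwise: this relies on identifying $P$ as a node of $F_i^{(0)}$ and on the fact that translations with trivial image in all component groups act trivially on the set of components, hence fix every node. The uniqueness of the parabolic fixed isotropic ray is classical, but since the entire contradiction rests on it I would state and apply it explicitly.
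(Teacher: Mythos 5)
Your overall strategy --- pick $P \in C \cap R_1 \cap R_2$, produce infinite-order Mordell--Weil translations $g_i \in {\rm MW}\,(\Phi_i)$ fixing $C$ and $P$, and feed ${\rm Dec}\,(S,C,P)$ into Corollary \ref{cor31} --- is exactly the paper's, and your extra care about why the translations fix the point $P$ (component groups, nodes of the fiber) is fine; the paper handles the same issue by passing to suitable powers. The gap is in your contradiction step. You assume ${\rm Dec}\,(S,C,P)$ is almost abelian and infer ``it is infinite, so by Theorem \ref{thm30} (3) it preserves a genus one fibration.'' Theorem \ref{thm30} (3) says that an infinite group preserves a genus one fibration if and only if it has \emph{no element of positive entropy}; it does not say this for an infinite \emph{almost abelian} group, and the two hypotheses are not interchangeable: the cyclic group generated by a single positive-entropy automorphism is abelian (hence almost abelian), infinite, and preserves no genus one fibration. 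Worse, in the situation at hand ${\rm Dec}\,(S,C,P)$ is exactly a group you expect to contain positive-entropy elements (the conclusion of the corollary says even ${\rm Ine}\,(C)$ does), so the hypothesis you would need for Theorem \ref{thm30} (3) is unavailable. As written, the linchpin of your argument rests on a false implication.

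The repair is to work with the subgroup $G = \langle g_1, g_2\rangle$ rather than with all of ${\rm Dec}\,(S,C,P)$, and to show \emph{directly} that $G$ preserves no genus one fibration; this is what the paper does. If $G$ preserved $\Phi_{|E|}$, then after renumbering $E$ is not proportional to $E_2$, so $B = E + E_2$ is nef and big and is fixed by $g_2$ (which fixes $E$ because it preserves $\Phi_{|E|}$, and fixes $E_2$ because it lies in ${\rm MW}\,(\Phi_2)$); an automorphism preserving a nef and big class has finite order (\cite[Lemma 2.8]{Og07}), contradicting that $g_2$ has infinite order. Your observation that $g_i^*$ is parabolic with unique fixed isotropic ray $\R_{\ge 0}E_i$ in fact gives an alternative proof of this same claim, since a commonly preserved fiber class would be a common fixed nef isotropic ray; so the geometric content of your write-up is salvageable. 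Once $G$ is known to be infinite and to preserve no genus one fibration, the paper concludes --- for this group generated by two zero-entropy translations, the setting of \cite[Theorem 1.2]{Og06} which it says it is modifying --- that $G$, hence ${\rm Dec}\,(S,C,P)$, is not almost abelian, and Corollary \ref{cor31} finishes the proof.
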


\begin{proof} Let us choose a point $P \in C \cap R_1 \cap R_2$, which exists by the assumption (3). By the assumption (1), we can choose an element $f_i \in {\rm MW}\, (\Phi_i)$ ($i=1$, $2$) of infinite order (and by replacing it by a suitable power if necessary) such that $f_i(C) = C$ and $f_i(P) = P$. Then $G := \langle f_1, f_2 \rangle$ is a subgroup of ${\rm Dec}\, (S, C, P)$. 

We now claim that $G$ does not preserves any genus one fibration on $S$. Our proof below is a slight modification of \cite[Theorem 1.2]{Og06} in which the existence of rational sections are assumed. . 

Indeed, otherwise, there is a genus one fibration $\Phi_{|E|} : S \to \BP^1$ preserving by both $f_1$ and $f_2$. Here $E$ is a genus one curve on $S$. By renumbering $i=1$ and $2$ if necessary, we may assume without loss of generality that $\Phi_{|E|}$ is a different genus one fibration from $\Phi_2$. Here we used the assumption (2). Then the classes $E$ and $E_2$ are nef. Since $E$ and $E_2$ are not proportional, it follows that
$$B := E + E_2$$
is a nef and big class in ${\rm Pic}\, (S) \simeq {\rm NS}\,(S)$. By the definition of $B$, the class $B$ is preserved by $f_2$. However, then, $f_2$ would be of finite order (see eg. \cite[Lemma 2.8]{Og07}). This contradicts to the fact that $f_2$ is of infinite order. Hence $G$ does not preserve any genus one fibration on $S$.

Note that $|G| = \infty$. Then $G$ is not almost abelian by Theorem \ref{thm30} (3). Since any subgroup of an almost abelian group is again almost abelian, the group ${\rm Dec}\,(S, C, P)$ is not almost abelian, either. The result now follows from Corollary \ref{cor31}.  
\end{proof}

\section{Singular K3 case}\label{sect4}

In this section, we work over the complex number field $\C$. Our main result of this section is Theorem \ref{thm:singIne}, which is the same as Theorem \ref{thm03} in Introduction. In this section and the next section, we use Kodaira's notation of singular fibers of genus one fibrations (See \cite[Page 565]{Ko63} for the notation). In this section and the next section, by our definition (Definition \ref{def}), an elliptic fibration always means a genus one fibration with a global section. 

\begin{theorem}\label{thm:singIne}
Let $X$ be a singular K3 surface. Then $X$ contains a smooth rational curve $C$ such that ${\rm Ine}\,(C)$ contains a non-commutative free subgroup isomorphic to $\Z * \Z$ and an element of positive entropy.
\end{theorem}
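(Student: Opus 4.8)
The plan is to produce, on every singular K3 surface $X$, a configuration of smooth rational curves to which Corollary~\ref{cor32} applies directly. Recall that a singular K3 surface has Picard number $20$, so $\mathrm{NS}(X)$ is large and its transcendental lattice $T(X)$ is an even positive definite binary form; moreover $X$ is abundantly supplied with $(-2)$-curves and with elliptic fibrations. The strategy is: (a) find a single smooth rational curve $C$ sitting at the intersection of two auxiliary smooth rational curves $R_1,R_2$; (b) exhibit two distinct genus one fibrations $\Phi_1,\Phi_2$, each of positive Mordell--Weil rank, whose fiber classes $E_i=D_i+a_iR_i+b_iC$ involve $C$ with positive multiplicity; (c) invoke Corollary~\ref{cor32} to conclude that $\mathrm{Ine}(C)$ contains $\Z*\Z$ and a positive entropy element.

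The key lattice-theoretic input is that on a singular K3 surface there exist elliptic fibrations with positive Mordell--Weil rank. This follows from the Shioda--Tate formula together with the fact that $\rho(X)=20$ is large: one chooses a primitive isotropic class $f\in\mathrm{NS}(X)$ (which exists since $\mathrm{NS}(X)$ has signature $(1,19)$ and is indefinite), after reflections makes $|f|$ a genus one fibration, and arranges that the sum of the ranks of the root sublattices generated by fiber components is strictly less than $\rho(X)-2=18$, forcing $\mathrm{MW}$-rank $=18-(\text{total component rank})>0$. Concretely I would first build the curve $C$ and then seek two isotropic classes $E_1,E_2$ that are non-proportional, each expressible as an effective combination using $C$ (so that the ADE configuration of reducible fibers contains $C$ in its support), and each with positive Mordell--Weil rank. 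The flexibility of having $\rho=20$ makes it plausible to choose the reducible fibers so that $C$, $R_1$, $R_2$ meet at a common point $P$: one wants a point $P\in C$ that lies on fiber components $R_1$ of $\Phi_1$ and $R_2$ of $\Phi_2$.

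The main obstacle, as I see it, is the simultaneous geometric condition~(3) of Corollary~\ref{cor32}: arranging $C\neq R_1$, $C\neq R_2$, and a \emph{common} intersection point $P\in C\cap R_1\cap R_2$, while keeping both fibrations of positive Mordell--Weil rank and non-proportional. Non-proportionality and positive $\mathrm{MW}$-rank are purely lattice conditions and should be arrangeable by the Shioda--Tate count; the delicate part is the incidence $C\cap R_1\cap R_2\neq\emptyset$, which is genuinely geometric and not read off from $\mathrm{NS}(X)$ alone. The cleanest route is probably to fix one fibration $\Phi_1$ having a reducible fiber of type, say, $I_n$ or a larger ADE fiber whose dual graph contains a chain $R_1$--$C$, so that $R_1$ and $C$ meet at a point $P$; then I would produce $\Phi_2$ so that $C$ again appears as a fiber component meeting a second component $R_2$ at the \emph{same} point $P$. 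Realizing this may require either an explicit model of a specific singular K3 (e.g.\ the most special one with $T(X)=\begin{pmatrix}2&1\\1&2\end{pmatrix}$ or $\begin{pmatrix}2&0\\0&2\end{pmatrix}$) followed by a deformation/specialization argument, or a careful analysis of a node on a Kodaira fiber that is forced to be a base point of the pencil configuration. I expect the proof to proceed by a concrete choice of such a configuration (possibly reusing the setup of \cite[Theorem~1.3]{Og18}, which the statement generalizes) and then a direct appeal to Corollary~\ref{cor32}.
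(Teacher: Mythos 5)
Your high-level framework is the same as the paper's (reduce everything to Corollary~\ref{cor32}), but the proposal stops exactly where the actual work begins, and you yourself flag the incidence condition $C\cap R_1\cap R_2\neq\emptyset$ as an unresolved obstacle. The paper closes this gap with a concrete construction that your sketch does not reach. Since $X$ is singular, ${\rm NS}(X)\cong U\oplus E_8^{\oplus 2}\oplus N$ with $N$ negative definite of rank $2$ (Shioda--Inose), so by \cite[Lemma 2.1]{Kon92} there is an elliptic fibration $\varphi$ whose reducible fibers are $II^*+II^*$, possibly plus an $I_2$ or $III$. Shioda--Tate then gives ${\rm rk\,MW}(\varphi)>0$, hence two distinct sections $D_1,D_2$; joining the two $II^*$ fibers by $D_1$ (resp.\ $D_2$) and deleting the two edge components of multiplicity $2$ produces two non-proportional nef classes $E_1,E_2$ of type $I_{12}^*$, whose positive Mordell--Weil rank is verified by an explicit height-pairing computation (\cite[p.~184]{Og07}), not just by a rank count. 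The point you miss about condition (3) is that Corollary~\ref{cor32} permits $R_1=R_2$: since $E_1$ and $E_2$ share almost all of their components (they differ only in which section was inserted), ${\rm Supp}\,E_1\cap{\rm Supp}\,E_2$ contains a large connected tree of $(-2)$-curves, and one simply picks two adjacent components $C$ and $C'=R_1=R_2$ there, so the ``delicate geometric incidence'' reduces to the trivial statement $C\cap C'\neq\emptyset$. No explicit model or deformation argument is needed.

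A second genuine gap: your Shioda--Tate count does not work uniformly. For discriminant $d=3$ (resp.\ $d=4$) the extra reducible fiber of Kondo's fibration already absorbs the remaining rank, so the auxiliary fibration $\varphi$ has Mordell--Weil rank $0$ and the construction above fails; the paper must treat $d=3$ by quoting \cite[Theorem 1.3]{Og18} and $d=4$ as the 2-elementary case $(\rho,a,\delta)=(20,2,1)$ of Section~\ref{sect5}. Your proposal, which asserts that positive Mordell--Weil rank ``should be arrangeable by the Shioda--Tate count'' in general, does not notice that these low-discriminant cases need a separate argument.
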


\begin{proof}

Let $d$ be the discriminant of $X$, i.e., the absolute value of the determinant of the N\'eron-Severi lattice ${\rm NS}\, (X)$, which is also the determinant of the transcendental lattice $T(X)$ of $X$. Then $d \ge 3$ and $X$ with the smallest two cases $d=3$ and $d=4$ are explicitly described in \cite{SI77}. In the case $d=3$, Theorem \ref{thm:singIne} is proved in \cite[Theorem 1.3]{Og18}. In the case $d=4$, the surface $X$ is a 2-elementary K3 surfaces with $\rho(X) = 20$ and $a(X) =2$ and Theorem \ref{thm:singIne} in this case will be proved in Section \ref{sect5}. 

In what follows, we assume $d\neq 3,4$ and we will closely follow the construction in \cite[Page184]{Og07}. 

First we find two elliptic fibrations whose Mordell-Weil groups are infinite.  In fact, the N\'eron-Severi lattice ${\rm NS}(X)$ of $X$ of the form:
$${\rm NS}\,(X)=U\oplus E_8^{\oplus 2}\oplus N\, ,$$ where $N$ is a negative definite lattice of rank 2. The lattice $U$ is an even unimodular hyperbolic lattice of rank $2$ and $E_8$ is the even unimodular negatice definite lattice of rank $8$. Note then that $d={\rm det}(N)$. Using this description of ${\rm NS}\,(X)$, one finds an elliptic fibration $\varphi: X\longrightarrow \PP^1$ whose reducible singular fibers are either 
$$II^* + II^*\,\, ,\,\, II^* +II^*+ I_2\,\, ,\,\, II^* + II^* + III\,\, .$$ See e.g. \cite[Lemma 2.1]{Kon92}.
Then rank of Mordell-Weil group of $\varphi$ is positive by the Shioda-Tate formula \cite[Corollary 5.3]{Shi90}. In particular, $\varphi$ admits at least two sections, say $D_1$ and $D_2$. Join two $II^*$ singular fibers of $\varphi_1$ by the section $D_1$ (resp. $D_2$) and throw out the components of multiplicity 2 at the edge of two $II^*$. Then one obtains a nef divisor of Kodaira's type $I_{12}^*$, say $E_1$ (resp. $E_2$), on $X$. The pencil $|E_1|$ (resp. $|E_2|$ ) gives rise to an elliptic fibration $\varphi_1:X\longrightarrow \PP^1$ (resp. $\varphi_2$). The two smooth rational curves $S_1$ and $S_2$ throwed out are actually the sections of $\varphi_i$ ($i=1$, $2$). We regard $S_1$ as the zero of ${\rm MW}\, (\varphi_i)$ and 
denote $P_i \in {\rm MW}\, (\varphi_i)$ 
the element corresponding to $S_2$. Then 
$${\rm rk\; MW}\, (\varphi_i) >0$$ 
for $i=1$ and $2$ by \cite[page 184]{Og07}. In fact, we checked there that 
$$\langle P_i, P_i \rangle > 0$$
for the height pairing on the Mordell-Weil group 
${\rm MW}\, (\varphi_i)$ (See \cite[Theorem 8.4, Definition 8.5]{Shi90} for the definition and \cite[Theorem 8.6, Table (8.16)]{Shi90} for the explicit formula we used). Thus $P_i \in {\rm MW}\, (\varphi_i)$ is of infinite order for $i=1$, $2$ by \cite[Page 228, (8.10)]{Shi90}. 

Let us choose two irreducible components $C$ and $C'$ of 
$${\rm Supp}\, E_1 \cap {\rm Supp}\, E_2\,\, $$ 
such that $C \not= C'$ and $C \cap C' \not= \emptyset$. Such $C$ and $C'$ certainly exist.
Now one can apply Corollary \ref{cor32} for $C$, $R_1=R_2 =C'$ and $E_i$ ($i=1$, $2$) to conclude 
the desired result.\end{proof}

\section{2-elementary K3 case}\label{sect5}

In this section, we assume that the base field is the complex number field $\C$. The main results of this section are Theorem \ref{thm:ge12} and Theorem \ref{thm:rho11} which are Theorem \ref{thm02} (1), (2).

Let $X$ be a 2-elementary K3 surface, i.e., $X$ is a complex projective K3 surface such that ${\rm NS}(X)^*/{\rm NS}(X)\cong (\Z/2\Z)^{\oplus a}$, for some positive integer $a=a(X)$, with Picard number $\rho=\rho(X)$. 2-elementary K3 surfaces are extensively studied by Nikulin. See for \cite[Section 4]{Ni81} about basic facts on 2-elementary K3 surfaces we will use. Since ${\rm NS}(X)$ is primitively embedded into the unimodular lattice $H^2(X,\Z)$, it follows that $\rho +a\le 22$. The K3 surface $X$ has an automorphism $\theta$ of order 2 such that  $$\theta^{*}|_{{\rm NS}\, (X)} = id_{{\rm NS}\, (X)}\,\, ,\,\, \theta^{*}\omega_X = -\omega_X\,\, .$$
Here $\omega_X$ is a nowhere vanishing holomorphic 2-form on $X$. The involution $\theta $ is in the center of ${\rm Aut}(X)$ and the fixed locus of $\theta$ is preserved under ${\rm Aut}(X)$. Following \cite[Definition 4.2.1]{Ni81}, the 2-elementary lattice ${\rm NS}(X)$ has an invariant $\delta$, where $\delta=0$ or $1$.

\begin{lemma}(\cite[Remark 4.4]{Og18})\label{lem:22}
If $X$ contains a smooth rational curve and ${\rm Aut}(X)$ has an element of positive entropy, then $\rho+a=22$.
\end{lemma}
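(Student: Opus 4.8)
The plan is to argue by contraposition: assuming $\rho(X) + a(X) < 22$, I will show that $X$ either contains no smooth rational curve or admits no automorphism of positive entropy. The whole argument rests on the fact, recalled above, that the anti-symplectic involution $\theta$ lies in the center of ${\rm Aut}\,(X)$. Hence every $f \in {\rm Aut}\,(X)$ commutes with $\theta$, so $f(X^\theta) = X^\theta$ and $f$ permutes the connected components of the fixed locus $X^\theta$ while preserving their genera. I would also record at the outset the standard dynamical reduction: for any $f \in {\rm Aut}\,(X)$ the isometry $f^*$ rotates the positive-definite period plane $\langle {\rm Re}\,\omega_X, {\rm Im}\,\omega_X \rangle \subset T(X) \otimes \R$ (since $f^*\omega_X = \alpha\,\omega_X$ with $|\alpha| = 1$) and acts on its negative-definite orthogonal complement through a compact orthogonal group; therefore $f^*|_{T(X)}$ has all its eigenvalues on the unit circle, and $h(f) > 0$ is equivalent to the spectral radius of $f^*|_{{\rm NS}\,(X)}$ being strictly larger than $1$. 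Thus it suffices to force $f^*|_{{\rm NS}\,(X)}$ to have spectral radius $1$.

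The heart of the proof is Nikulin's description of $X^\theta$ for a $2$-elementary K3 surface (\cite[Section 4]{Ni81}). Outside the two exceptional triples $(\rho,a,\delta) = (10,10,0)$ and $(10,8,0)$, one has $X^\theta = C_g \sqcup R_1 \sqcup \cdots \sqcup R_k$ with the $R_i$ smooth rational and $C_g$ a smooth curve of genus $g = (22 - \rho - a)/2$. The hypothesis $\rho + a < 22$ then forces $g \ge 1$, and $C_g$ is the unique positive-genus component, so it is preserved by every $f \in {\rm Aut}\,(X)$ and $f^*[C_g] = [C_g]$ in ${\rm NS}\,(X)$. If $g \ge 2$ then $C_g^2 = 2g-2 > 0$, and an isometry of the hyperbolic lattice ${\rm NS}\,(X)$ fixing a class of positive self-intersection is of finite order there (the fact used repeatedly in this paper, cf. \cite[Lemma 2.8]{Og07}); together with the first paragraph this yields $h(f) = 0$. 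If $g = 1$ then $C_g$ is a smooth elliptic curve with $C_g^2 = 0$, the nef class $[C_g]$ defines a genus one fibration $\Phi_{|C_g|} : X \to \BP^1$, and $f^*[C_g] = [C_g]$ shows $f$ preserves this fibration, so $h(f) = 0$ by Theorem \ref{thm30} (3). Either way there is no automorphism of positive entropy, contradicting the hypotheses.

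It remains to treat the two exceptional triples, and this is where I expect the main difficulty to lie, since here the uniform fixed-curve mechanism breaks down. For $(10,8,0)$ the fixed locus is a pair of disjoint smooth elliptic curves $E_1 \sqcup E_2$; each $f$ fixes or interchanges them, so $f^2$ fixes $E_1$ and preserves the genus one fibration $\Phi_{|E_1|}$, giving $h(f^2) = 0$ and hence $h(f) = 0$ because $d_1(f^2) = d_1(f)^2$. For $(10,10,0)$ the fixed locus is empty, so no fixed-curve argument is available; instead I would use that ${\rm NS}\,(X)$ is the unique even $2$-elementary lattice of signature $(1,9)$ with these invariants, namely $U(2) \oplus E_8(2)$, all of whose self-intersection values are divisible by $4$. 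In particular ${\rm NS}\,(X)$ contains no $(-2)$-class, so $X$ has no smooth rational curve and the hypothesis fails. This last case is the only place where the smooth-rational-curve hypothesis is genuinely needed, and disentangling the interplay between the two hypotheses in the exceptional Nikulin cases is the most delicate point of the argument.
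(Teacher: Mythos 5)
Your proof is correct and follows essentially the same route as the paper: invoke Nikulin's classification of $X^{\theta}$, use the central involution to force invariance of the positive-genus fixed curve when $\rho+a<22$, and dispose of the two exceptional triples $(10,10,0)$ (no $(-2)$-classes in $U(2)\oplus E_8(2)$) and $(10,8,0)$ (two elliptic fixed curves) separately. The only difference is that where the paper cites \cite[Theorem 1.4]{Og07} as a black box, you unpack it into the direct lattice argument (finite order on a fixed positive class, vanishing entropy from a fixed isotropic nef class), which is a valid substitution.
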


\begin{proof}
Suppose  $X$ contains a smooth rational curve and ${\rm Aut}(X)$ has an element of positive entropy. Then $(\rho,a,\delta)\neq (10,10,0)$ (otherwise, ${\rm NS}(X)\cong U(2)\oplus E_8(2)$ and $X$ contains no smooth rational curve, a contradiction) and $(\rho,a,\delta)\neq (10,8,0)$ (otherwise, by \cite[Theorem 4.2.2]{Ni81}, $X^\theta$ is the disjoint union of two smooth elliptic curves, then, by \cite[Theorem 1.4]{Og07}, ${\rm Aut}(X)$ has no element of positive entropy, a contradiction). Then, by \cite[Theorem 4.2.2]{Ni81} again, $$X^\theta=H+\sum_{i=1}^{k}C_i$$ where $H$ is a smooth projective curve of genus $g_H=(22-\rho-a)/2$, $C_i$ are smooth rational curves, and $k=(\rho-a)/2$.  If $\rho+a<22$, then $g_H>0$ and, by \cite[Theorem 1.4]{Og07},  ${\rm Aut}(X)$ no element of positive entropy, a contradiction. Thus, $\rho+a=22$. This completes the proof of the lemma.
\end{proof}

 We are interested in an inerita group with an element of positive entropy (see Theorem \ref{thm21}). {\it Thus, in the rest of this section, we assume that $$\rho + a =22.$$} Then the locus of fixed points of $\theta$ $$X^{\theta}=\cup_{i=1}^{i=k}C_i$$ where, $k=(\rho-a+2)/2$, and $C_i$ are disjoint smooth rational curves. Let $$C=C_1+...+C_k.$$  We use $A_l\, (l\ge 1),D_m \,(m\ge 4), E_{n}\, (n=6,7,8)$ to denote a negative definite root lattice whose basis is given by the corresponding Dynkin diagram. By classification of 2-elementary lattices (\cite[Theorem 4.3.2]{Ni81}), there are exactly 11 cases for the triple $(\rho, a,\delta)$ with $\rho +a=22$ (\cite[Section 4, Table 1]{Ni81}):

\begin{enumerate}
\item $(\rho, a,\delta)=(11,11,1)$ (then $k=1$, ${\rm NS}(X)\cong U(2)\oplus A_1^{\oplus 9}$);

\item $(\rho, a,\delta)=(12,10,1)$ (then $k=2$, ${\rm NS}(X)\cong U\oplus A_1^{\oplus 10}$);

\item $(\rho, a,\delta)=(13,9,1)$ (then $k=3$,  ${\rm NS}(X)\cong U\oplus D_4\oplus A_1^{\oplus 7}$);

\item $(\rho, a,\delta)=(14,8,1)$ (then $k=4$,  ${\rm NS}(X)\cong U\oplus D_4\oplus D_4\oplus A_1^{\oplus 4}$);

\item $(\rho, a,\delta)=(15,7,1)$ (then $k=5$,  ${\rm NS}(X)\cong U\oplus D_4^{\oplus 3}\oplus A_1$);

\item $(\rho, a,\delta)=(16,6,1)$ (then $k=6$,  ${\rm NS}(X)\cong U\oplus D_6^{\oplus 2}\oplus A_1^{\oplus 2}$);

\item $(\rho, a,\delta)=(17,5,1)$ (then $k=7$,  ${\rm NS}(X)\cong U\oplus D_6\oplus D_8\oplus A_1$);

\item $(\rho, a,\delta)=(18,4,0)$ (then $k=8$,  ${\rm NS}(X)\cong U\oplus D_4\oplus D_{12}$);

\item $(\rho, a,\delta)=(18,4,1)$ (then $k=8$,  ${\rm NS}(X)\cong U\oplus D_{14}\oplus A_1^{\oplus 2}$);

\item  $(\rho, a,\delta)=(19,3,1)$ (then $k=9$,  ${\rm NS}(X)\cong U\oplus D_{16}\oplus A_1$);

\item $(\rho, a,\delta)=(20,2,1)$ (then $k=10$, ${\rm NS}(X)\cong U\oplus E_8\oplus D_{10}$).
\end{enumerate}

\medskip

The following lemma is due to \cite[Section 4]{Ni81}.

\begin{lemma}\label{lem:2points}
Let  $H$ be a smooth rational curve on $X$ different from $C_i$ for all $1\le i\le k $. Then $\theta(H)=H$, and $C$ and $H$ meet at exactly two points transversally. In particular, $C.H=2$.
\end{lemma}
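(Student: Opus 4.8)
The plan is to exploit two features of the non-symplectic involution $\theta$: that it acts trivially on ${\rm NS}\,(X)$, and that it scales $\omega_X$ by $-1$, so that its fixed locus is the smooth curve $X^\theta = \cup_{i=1}^k C_i$.

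First I would establish $\theta(H) = H$. Since $H$ is a smooth rational curve on the K3 surface $X$, we have $H^2 = -2$, and $H$ is the unique effective divisor in its class: any effective $H' \equiv H$ must contain $H$ as a component because $H \cdot H' = -2 < 0$, and then $H' - H$ is a numerically trivial effective class, hence $0$. As $\theta$ acts as the identity on ${\rm NS}\,(X)$, the class $[H]$ is $\theta$-invariant, so $\theta(H) \in |H| = \{H\}$, giving $\theta(H) = H$.

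Next, $\theta|_H$ is an involution of $H \simeq \BP^1$. I claim it is nontrivial: otherwise $H \subseteq X^\theta = \cup_i C_i$, and since $H$ is irreducible and the $C_i$ are disjoint, this would force $H = C_i$ for some $i$, contradicting the hypothesis. A nontrivial involution of $\BP^1$ has exactly two fixed points, and these are precisely the points of $H \cap X^\theta = H \cap {\rm Supp}\, C$. Hence $H$ meets $C$ in exactly two points, each lying on a single component $C_i$ by disjointness.

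Finally, I would verify transversality at each such point $P$. The involution $\theta$ linearizes on $T_P X$; since $\theta^2 = {\rm id}$ and $\theta^*\omega_X = -\omega_X$, the two eigenvalues of $d\theta_P$ multiply to $-1$, so they are $+1$ and $-1$, and the $(+1)$-eigenline equals $T_P(X^\theta) = T_P C_i$. On the other hand, $P$ is a fixed point of the nontrivial involution $\theta|_H$ of the curve $H$, so linearizing $\theta|_H$ at $P$ shows its derivative there is $-1$; therefore $T_P H$ is the $(-1)$-eigenline, transverse to $T_P C_i$. Thus $H$ and $C_i$ meet transversally at $P$, contributing $1$ to the intersection number, and summing over the two points yields $C \cdot H = 2$. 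The main obstacle is exactly this last step: everything preceding it is formal, but upgrading ``$H$ meets $C$ in two points'' to ``$C \cdot H = 2$'' requires the local eigenvalue analysis of the non-symplectic involution, which is precisely where the condition $\theta^*\omega_X = -\omega_X$ is used. Equivalently, one may invoke the standard local normal form $\theta(x,y) = (x,-y)$ for a non-symplectic involution fixing a smooth curve, under which $X^\theta = \{y = 0\}$ locally while the $\theta$-invariant curve $H$ with nontrivial $\theta|_H$ has tangent direction $\{x = 0\}$ at $P$.
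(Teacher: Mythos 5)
Your proof is correct and follows essentially the same route as the paper's: $\theta(H)=H$ because $\theta^*$ is trivial on ${\rm NS}(X)$ and $H$ is the unique member of $|H|$, the two intersection points are the fixed points of the nontrivial involution $\theta|_H$ of $\BP^1$, and transversality comes from the eigenvalue decomposition $\{+1,-1\}$ of $d\theta_P$ with $T_PC_i$ the $(+1)$-eigenline and $T_PH$ the $(-1)$-eigenline. You simply spell out details (uniqueness of $H$ in its class, nontriviality of $\theta|_H$, why the eigenvalues are $+1$ and $-1$) that the paper leaves implicit.
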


\begin{proof}
Following \cite[Section 4]{Ni81}, we recall a proof. Since $\theta$ acts trivially on ${\rm NS}(X)$ and $H$ is the unique element of the complete linear system $|H|$, it follows that $\theta (H)=H$. Note that any involution of $\PP^1$ has exactly two fixed points. Since $X^{\theta}=C$, it follows that $C$ and $H$ intersect at exactly two points. At each point, say $P$, of the two intersection points, the tangent direction of $C$ (resp. $H$) corresponds to the eigenvector of the induced action $\theta^*|T_{X,P}$ with respect to eigenvalue $1$ (resp. $-1$). Thus, $C$ and $H$ meet transversally  at $P$.\end{proof}

\begin{lemma}\label{lem:Jacobian}
Let $\varphi:X\longrightarrow \PP^1$ be an elliptic fibration with a section $H$. Suppose there exists $i\in\{1,2,..,k\}$ such that $C_i$ is not contained in any fiber of $\varphi$. Then $H\subset C$. Moreover, $C.F=4$ for any fiber $F$ of $\varphi$.
\end{lemma}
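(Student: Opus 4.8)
The plan is to leverage the hypothesis that some $C_i$ is not contained in a fiber to prove that $\theta$ acts trivially on the base of $\varphi$, and then to deduce both assertions from the way $\theta$ acts on sections and on a general fiber.

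\emph{Step 1 (the base action is trivial).} Since $\theta^{*}$ is the identity on ${\rm NS}(X)$ it fixes the fiber class, and as $\varphi = \Phi_{|F|}$ is given by the pencil $|F|$, the involution descends to some $\bar{\theta}\in{\rm Aut}(\PP^1)$ with $\varphi\circ\theta = \bar{\theta}\circ\varphi$. Now take the curve $C_i\cong\PP^1$ that is not contained in any fiber, so that $\varphi|_{C_i}$ is a nonconstant, hence surjective, morphism onto $\PP^1$; since $C_i\subset X^{\theta}$ we have $\theta|_{C_i}={\rm id}$, and then $\bar{\theta}\circ\varphi|_{C_i}=\varphi|_{C_i}$ forces $\bar{\theta}$ to fix $\varphi(C_i)=\PP^1$ pointwise. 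Hence $\bar{\theta}={\rm id}$ and $\theta$ preserves every fiber of $\varphi$. I expect this to be the crux of the argument: it is the only place the non-fiber hypothesis enters, and everything afterwards is a formal consequence.

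\emph{Step 2 ($H\subset C$).} The section $H$ is a smooth rational curve with $H^2=-2$ by adjunction, so $|H|=\{H\}$ and $\theta(H)=H$ exactly as in Lemma \ref{lem:2points}. Because $\theta$ fixes every fiber $F_t$ and $H$ meets $F_t$ in a single point $p$, the point $\theta(p)$ lies in $\theta(H)\cap\theta(F_t)=H\cap F_t=\{p\}$, so $\theta(p)=p$; thus $\theta|_H={\rm id}_H$ and $H\subset X^{\theta}=C$. As $H$ is irreducible and $C=\sqcup_i C_i$, this gives $H=C_j$ for some $j$.

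\emph{Step 3 ($C.F=4$).} I would compute the intersection number on a general smooth fiber $F$, on which $\theta$ induces a nontrivial involution $\theta|_F$ of an elliptic curve, nontrivial because $F\not\subset X^{\theta}=C$. Writing $\omega_X$ locally as $\varphi^{*}(dt)\wedge\alpha$ with $\alpha|_F$ a generator of $H^0(F,\omega_F)$, the relations $\theta^{*}\varphi^{*}dt=\varphi^{*}dt$ (from $\bar{\theta}={\rm id}$) and $\theta^{*}\omega_X=-\omega_X$ yield $(\theta|_F)^{*}\omega_F=-\omega_F$. An involution of an elliptic curve acting by $-1$ on its holomorphic $1$-form is of the form $x\mapsto -x+c$ and therefore has exactly four fixed points; these are precisely $C\cap F=X^{\theta}\cap F$, and the intersections are transversal since the $+1$ and $-1$ eigenspaces of $\theta^{*}$ on $T_P X$ are complementary, with $T_P C$ and $T_P F$ lying in them respectively (as in Lemma \ref{lem:2points}). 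Hence $C.F=4$.
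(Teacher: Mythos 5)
Your proposal is correct and follows essentially the same route as the paper's own proof: the non-fiber hypothesis on $C_i$ forces $\theta$ to preserve every fiber, whence the section $H$ (already $\theta$-stable by Lemma \ref{lem:2points}) is fixed pointwise and lies in $C$, and on a general fiber $\theta$ restricts to an inversion of the elliptic curve with exactly four transversal fixed points. Your Steps 1 and 3 merely make explicit (via the induced action on the base and the factorization of $\omega_X$) what the paper asserts more briefly.
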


\begin{proof}
Since $C_i$ is not contained  in any fiber of $\varphi$, it follows that $C_i$ intersects with each fiber of $\varphi$. Since $C_i\subset X^\theta$, it follows that $\theta$ preserves each fiber of $\varphi$, i.e., $\varphi \circ \theta=\varphi$.  Note also that $\theta(H)=H$ by Lemma \ref{lem:2points}. Then $\theta |_H=id_H$ and $H\subset C$.

Note that for a general fiber, say $E$, of $\varphi$, $\theta|_E$ is just the inversion of the elliptic curve $E$ (view $E\cap H$ as zero $O$ of $E$). Thus $\theta|_E$ has exactly 4 fixed points. Then $E$ meets with $C$ at four points transversally. Thus $C.F=C.E=4$ for any fiber $F$ of $\varphi$. \end{proof}

In the next lemma, we need the notion of the Mordell-Weil group ${\rm MW}(\varphi)$ of a genus one fibration $\varphi: X\longrightarrow \PP^1$ explained in Remark \ref{rmk:35}.

\begin{lemma}\label{lem:positiveMW}
Let $E$ be an effective reducible divisor such that the complete linear system $|E|$ defines a genus one fibration $\varphi: X\longrightarrow \PP^1$ . Let $\sI=\{i | C_i \subset {\rm Supp} E\}$. Let $r$ be the number of irreducible components of $E$. If either one of the following (1) or (2) holds, then ${\rm rk \; MW}(\varphi)>0$.
\begin{enumerate}

 \item  the cardinality $|\sI|$ is equal to $k$ and $r<\rho(X)-1$, or
  
 \item  $|\sI|=k-1$, $C_{i}.E=0$ for the unique $i\in \{1,...,k\}\setminus \sI$, and $r <\rho(X) -2$.\end{enumerate}
  
  \end{lemma}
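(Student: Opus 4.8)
The plan is to evaluate ${\rm rk}\,{\rm MW}(\varphi)$ through the Shioda--Tate formula for the Jacobian elliptic fibration $J(\varphi) : J(X) \to \PP^1$ introduced in Remark \ref{rmk:35}. Since $J(X)$ is again a K3 surface with $\rho(J(X)) = \rho(X)$ (it has isometric transcendental lattice) and $J(\varphi)$ has the same Kodaira fiber types as $\varphi$, the formula \cite[Corollary 5.3]{Shi90} reads
$${\rm rk}\,{\rm MW}(\varphi) = \rho(X) - 2 - \sum_{v}(m_v - 1),$$
where the sum runs over the reducible fibers of $\varphi$ and $m_v$ is the number of irreducible components of the fiber over $v$. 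Thus it suffices to bound the total reducible-fiber contribution $\sum_v(m_v-1)$ by $r-1$ in case (1) and by $r$ in case (2); the hypotheses $r < \rho(X)-1$ and $r < \rho(X)-2$ then give ${\rm rk}\,{\rm MW}(\varphi) > 0$ respectively.

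The key step is to show that $E$ is essentially the only source of reducible fibers. First I would note that in both cases every $C_i$ is vertical: the $C_i$ with $i \in \sI$ are components of the fiber $E$, while in case (2) the remaining curve $C_{i_0}$ satisfies $C_{i_0}.E = 0$, hence $C_{i_0}.F = 0$ for a general fiber $F$, so $C_{i_0}$ lies in some fiber $F_{v_0} \neq E$. Now suppose $F_v$ is a reducible fiber other than $E$ (and other than $F_{v_0}$ in case (2)). Each irreducible component $H$ of $F_v$ is a smooth rational curve, and since all the $C_i$ lie in $E$ or in $F_{v_0}$, the curve $H$ differs from every $C_i$. By Lemma \ref{lem:2points} we then have $C.H = 2$; but $H$ is disjoint from $C$, because the components of $C$ sit in fibers other than $F_v$. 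This contradiction shows there is no such $F_v$. In case (1) this already proves that $E$ is the unique reducible fiber, so $\sum_v(m_v-1) = r-1$ and ${\rm rk}\,{\rm MW}(\varphi) = \rho(X) - 1 - r > 0$.

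In case (2) it remains to control the fiber $F_{v_0}$ containing $C_{i_0}$. Since $C_{i_0}\cong \PP^1$ is a smooth rational curve inside a fiber, $F_{v_0}$ must be reducible. Writing $F_{v_0} = \sum_l a_l \Theta_l$ with $\Theta_0 = C_{i_0}$, every other component $\Theta_l$ is a smooth rational curve different from all $C_i$, so Lemma \ref{lem:2points} forces $C.\Theta_l = 2$; as $C_{i_0}$ is the only component of $C$ meeting $F_{v_0}$, this gives $\Theta_0.\Theta_l = 2$ for each $l\neq 0$. Feeding $\Theta_j.F_{v_0} = 0$ (for $j=0$ and for a second component) into the relations $\sum_{l\neq j} a_l(\Theta_j.\Theta_l) = 2a_j$ then forces $F_{v_0}$ to have exactly two components, a fiber of type $I_2$. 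Hence $\sum_v(m_v-1) = (r-1)+1 = r$ and ${\rm rk}\,{\rm MW}(\varphi) = \rho(X) - 2 - r > 0$.

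I expect the delicate point to be this last paragraph: pinning down that $F_{v_0}$ has exactly two components from the numerical constraint $\Theta_0.\Theta_l = 2$ together with the transversality asserted in Lemma \ref{lem:2points}, which also excludes the tangential type $III$. The auxiliary fact that passing to the Jacobian preserves both the Picard number and the reducible-fiber configuration is standard but should be invoked with care.
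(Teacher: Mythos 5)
Your proof is correct and follows essentially the same route as the paper: in both cases you use Lemma \ref{lem:2points} to show that $E$ (and, in case (2), one additional two-component fiber containing $C_{i_0}$) exhausts the reducible fibers, and then you count fiber components against $\rho(X)$ via Shioda--Tate. The only cosmetic difference is that the paper delegates the final numerical step to \cite[Lemma 2.1]{Yu18} rather than writing out the Shioda--Tate formula for the Jacobian fibration explicitly as you do.
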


\begin{proof}
Case (1): By Lemma \ref{lem:2points},  $E$ is the only reducible singular fiber of $\varphi$ (otherwise, suppose $H$ is a smooth rational curve in a reducible fiber different from $E$, then $H.E=0$ and $H.C=0$, a contradiction). Then by $r<\rho(X)-1$ and by \cite[Lemma 2.1]{Yu18},  $\varphi$ has infinite automorphism group, and ${\rm rk \; MW}(\varphi)>0$. 

Case (2):  Since $C_{i}.E=0$, it follows that $C_{i}$ is in a reducible fiber, say $F$, of $\varphi$. Let $H$ be a component of $F$ different from $C_{i}$. Then $H.E=0$. Since $|\sI|=k-1$, it follows that $2=H.C=H.C_{i}$. Thus, $F=C_{i}+H$. Similar to Case (1), only $E$ and $F$ are the reducible fibers of $\varphi$. Then by $r <\rho(X)-2$ and by \cite[Lemma 2.1]{Yu18}, ${\rm rk \; MW}(\varphi)>0$. \end{proof}

Theorem \ref{thm02} (1) follows from:

\begin{theorem}\label{thm:ge12}
If $\rho\ge 12$, then there exists a smooth rational curve $H\subset X$ such that ${\rm Ine}\,(H)$ contains a non-commutative free subgroup isomorphic to $\Z * \Z$ and an element of positive entropy.
\end{theorem}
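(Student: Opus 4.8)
The plan is to reduce the statement to Corollary \ref{cor32}. Concretely, it suffices to produce a smooth rational curve $H$ (to be taken as the curve ``$C$'' of Corollary \ref{cor32}), a smooth rational curve $R$ (taking $R_1 = R_2 = R$), and two genus one fibrations $\Phi_i = \Phi_{|E_i|}$ ($i = 1, 2$) of positive Mordell--Weil rank whose fibre classes $E_i$ are non-proportional in ${\rm NS}\,(X)$, each contain both $H$ and $R$ as components (so that $E_i = D_i + a_i R + b_i H$ with $a_i, b_i > 0$ for suitable effective $D_i$), and for which $H$ and $R$ meet at a point. The last requirement yields $H \cap R_1 \cap R_2 = H \cap R \neq \emptyset$, so that the three hypotheses of Corollary \ref{cor32} are met and the conclusion follows at once.

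To construct the fibrations I would use the explicit lattices ${\rm NS}\,(X) \cong U \oplus R_X$ recorded in cases (2)--(11) of Nikulin's list above, together with the geometry of the fixed locus $X^{\theta} = C_1 + \cdots + C_k$, a union of $k = \rho - 10 \ge 2$ disjoint $(-2)$-curves. The governing principle is Lemma \ref{lem:2points}: any smooth rational curve $H \neq C_i$ satisfies $C . H = 2$, so it meets $X^\theta$ in exactly two points and serves naturally as a connector joining two of the $C_i$ (or meeting a single $C_i$ twice). Threading the $C_i$ together with such connectors, I would assemble a connected configuration of $(-2)$-curves of Kodaira type $I_n$ or $I_n^{*}$ and realize it as a single reducible fibre $E_i$. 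Choosing $E_1$ and $E_2$ to be two genuinely distinct such configurations sharing an adjacent pair of components, namely a connector $H$ and a neighbouring curve $R$ meeting it, then supplies both the non-proportionality in hypothesis (2) and the common point $H \cap R$ in hypothesis (3).

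Positivity of the Mordell--Weil ranks is where Lemma \ref{lem:positiveMW} enters. If $E_i$ is arranged to contain all of the $C_i$ (case (1) of the lemma), or all but one with the remaining $C_j$ satisfying $C_j . E_i = 0$ and hence sitting in a second reducible fibre $C_j + (\text{neighbour})$ (case (2)), then $E_i$ is forced to be essentially the only reducible fibre, and by the Shioda--Tate formula positivity of ${\rm rk}\,{\rm MW}\,(\Phi_i)$ reduces to the component bound $r < \rho - 1$ (resp.\ $r < \rho - 2$), where $r$ is the number of irreducible components of $E_i$. The construction must therefore accommodate the disjoint $C_i$ economically, and the efficient device is a single fibre of type $I_n^{*}$: its multiplicity-two spine together with the four multiplicity-one ends can absorb many mutually disjoint $(-2)$-curves while keeping $r = n+5$ below the required bound, drawing on the large $D$-summand of ${\rm NS}\,(X)$ in the cases with $\rho$ large. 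The low Picard cases, such as $U \oplus A_1^{\oplus 10}$ with $k = 2$, only require threading two or three fixed curves and are handled by small $I_n$ or $I_n^{*}$ configurations.

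The main obstacle I anticipate is the high Picard number cases, where a naive configuration spends too many components. For $(19,3,1)$ with ${\rm NS}\,(X) \cong U \oplus D_{16} \oplus A_1$ the summand $D_{16}$ directly furnishes an $I_{12}^{*}$ fibre whose $17 < \rho - 1$ components comfortably house all nine $C_i$; but for $(20,2,1)$ with ${\rm NS}\,(X) \cong U \oplus E_8 \oplus D_{10}$ no such $D$-summand is present, and I would instead build a large $I_n^{*}$ fibre by the junction trick used in the proof of Theorem \ref{thm:singIne} (joining a $II^{*}$-type and an $I_n^{*}$-type configuration by a section and discarding the multiplicity-two components at the edge), invoking case (2) of Lemma \ref{lem:positiveMW} for nine of the ten fixed curves. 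In every case one must then verify simultaneously the non-proportionality of $E_1, E_2$, the component bound of Lemma \ref{lem:positiveMW}, and the existence of the shared adjacent pair $H, R$. Coordinating all of these constraints uniformly across the ten lattice types of the list is the principal technical burden of the proof.
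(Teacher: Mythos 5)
Your strategy is exactly the paper's: reduce to Corollary \ref{cor32} by exhibiting, for each of the ten lattice types, two non-proportional fibre classes $E_1$, $E_2$ obtained by threading the disjoint fixed curves $C_i$ through connecting $(-2)$-curves, sharing an adjacent pair of components, with Mordell--Weil rank positivity supplied by Lemma \ref{lem:positiveMW}. (In the paper the curve fed into Corollary \ref{cor32} is always a component $C_i$ of $X^{\theta}$, the auxiliary curve $R_1=R_2$ is an adjacent connector, and $E_1,E_2$ differ in exactly one component --- e.g.\ $H_2$ versus $H_3$ when $\rho=12$ --- which yields non-proportionality for free.) So the outline is sound and matches the source.

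The gap is that the outline is all you have supplied: the actual content of the proof is the explicit case-by-case construction --- starting from a concretely described elliptic fibration whose reducible fibres realize the root summand of ${\rm NS}(X)$, identifying which $C_i$ are sections, which are $3$-sections and which lie in fibres, and writing down $E_1$, $E_2$ together with the incidence data verifying hypotheses (1)--(3) of Corollary \ref{cor32}. You name this as ``the principal technical burden'' but do not carry it out, and your one concrete prediction for the hardest case is wrong in a way that matters. For $(\rho,a,\delta)=(20,2,1)$ the paper does \emph{not} use Lemma \ref{lem:positiveMW} at all: no fibre in its construction absorbs nine or ten of the ten disjoint fixed curves, and instead it takes $E_1,E_2$ of Kodaira type $IV^*$ (each containing only $C_0,C_3,C_6,C_9$, so neither case of the lemma applies) and proves ${\rm rk}\,{\rm MW}>0$ directly from the group structure $\C\times\Z/3\Z$ of a $IV^*$ fibre together with two disjoint sections $G_{23},G_{34}$ meeting the same fibre component. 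Your proposed route --- a large $I_n^*$ built by the junction trick followed by Lemma \ref{lem:positiveMW}(2) --- would require a single fibre containing nine mutually disjoint $C_i$, hence at least eight connectors and so at least $17$ components, while still satisfying $r<\rho-2=18$; that forces an exactly extremal tree configuration, and you would further need a second such fibre sharing an adjacent pair with the first. You give no reason such configurations exist in $U\oplus E_8\oplus D_{10}$, and this is precisely the point where the argument cannot be waved through.
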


\begin{proof}

Case {$(\rho,a,\delta)=(12,10,1)$}: Then $k=2$ and $C=C_1+C_2$. By  ${\rm NS}(X)\cong U\oplus A_1^{\oplus 10}$, there exists an elliptic fibration $\varphi:X\longrightarrow \PP^1$ with exactly 10 reducible fibers: $H_i+H_i^\prime$, $1\le i\le 10$, where $H_i$ and $H_i^\prime$ are smooth rational curves. By Lemma \ref{lem:2points}, at least one of $C_1$ and $C_2$ is not contained in any fiber of $\varphi$. Then, by Lemma  \ref{lem:Jacobian}, one of the two components of $C$ is a section of $\varphi$. Interchanging $C_1$ and $C_2$ if necessary, we may assume $C_1$ is a section of $\varphi$. Then, for any $i$, interchanging $H_i$ and $H_i^\prime$ if necessary, we may assume $C_1.H_i=1$. By Lemma \ref{lem:2points}, $C.H_i=2$. Thus $C_2.H_i=1$.  Let $$E_1=C_1+H_1+C_2+H_2$$ and $$E_2=C_1+H_1+C_2+H_3,$$ which are of Kodaira type $I_4$. By Lemma \ref{lem:positiveMW} (1), $|E_1|$ and $|E_2|$ define two genus one fibrations of positive Mordell-Weil rank. Thus, by applying Corollary \ref{cor32} for $E_1,E_2,C=C_1,R_1=R_2=H_1$, we deduce that ${\rm Ine}\,(C_1)$ contains a non-commutative free subgroup isomorphic to $\Z * \Z$ and an element of positive entropy. Thus, this case is proved.

Note that the choice of $H$ in Theorem \ref{thm:ge12} is not necessarily unique. In fact, for the same reason, each of ${\rm Ine} (C_2)$ and ${\rm Ine} (H_1)$ contains a non-commutative free subgroup isomorphic to $\Z * \Z$ and an element of positive entropy.


Case {$(\rho,a,\delta)=(13,9,1)$}: Then $k=3$ and $C=C_1+C_2+C_3$. By  ${\rm NS}(X)\cong U\oplus D_4\oplus A_1^{\oplus 7}$, there exists an elliptic fibration $\varphi:X\longrightarrow \PP^1$ with exactly 8 reducible fibers: $$2F_0+F_1+F_2+F_3+F_4\;({\rm Kodaira \; type}\; I_0^*), H_1+H_1^\prime,\ldots, \; H_7+H_7^\prime.$$ As in the previous case, by Lemmas \ref{lem:2points}, \ref{lem:Jacobian}, we may assume $C_1$ is a section of $\varphi$. Then we may assume $C_1.F_1=1$ and $C_1.H_i=1$ for all $i$. Since $\theta$ preserves any smooth rational curve on $X$ (Lemma \ref{lem:2points}), it follows that each of the four points $F_0\cap F_i$, $1\le i\le 4$, must be fixed by $\theta$. Then $F_0$ must be fixed by $\theta$ pointwisely. Thus $F_0\subset C$, and we may assume $F_0=C_2$. Thus $C_2.H_i=0$. Then by Lemma \ref{lem:2points}, $C_3.H_i=1$ for all $1\le i\le 7$. Note that since $C_1$ is a section of $\varphi$ and $C_1.F_1=1$, it follows that $C_1.F_i=0$, for all $i=2,3,4$. Then $C_3.F_i=1$, for all $i=2,3,4$. Note that $C_3$ is a 3-section of $\varphi$. Let $$E_1:=C_2+F_1+C_1+H_1+C_3+F_2 \;({\rm Kodaira \; type}\; I_6)$$ and $$E_2:=C_2+F_1+C_1+H_2+C_3+F_2 \; ({\rm Kodaira \; type}\; I_6).$$  By Lemma \ref{lem:positiveMW} (1), $|E_1|$ and $|E_2|$ define two different genus one fibrations of positive Mordell-Weil rank. Thus, by applying Corollary \ref{cor32} for $E_1,E_2,C=C_1,R_1=R_2=F_1$, we deduce that ${\rm Ine}\,(C_1)$ contains a non-commutative free subgroup isomorphic to $\Z * \Z$ and an element of positive entropy. Thus, this case is proved.

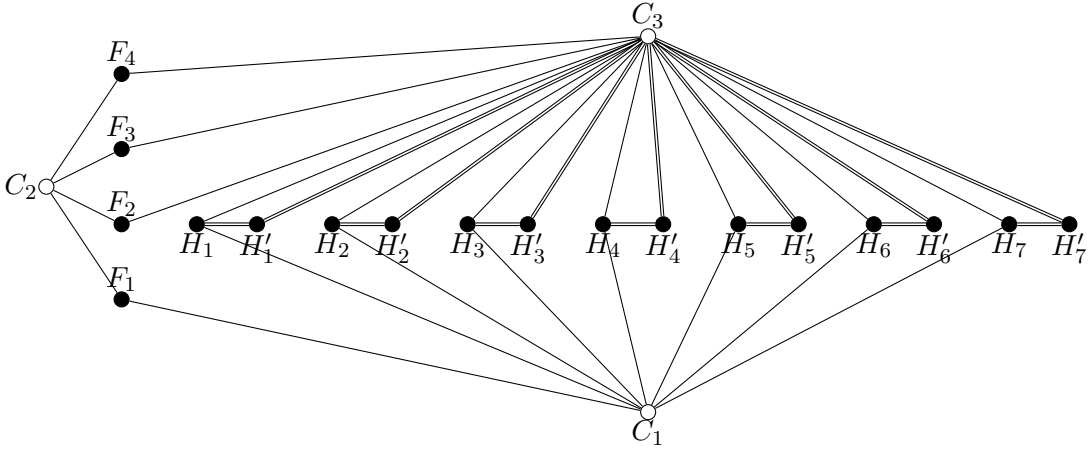
\begin{figure}

\begin{tikzpicture}[inner sep=0pt,minimum size=2mm]

\path  node(C2) at ( 0,-0.5) [circle,,draw,label=left:$C_2$] {}
  node(F4) at ( 1,1) [circle,fill=black,draw,label=above:$F_4$] {}
  node(F3) at ( 1,0) [circle,fill=black,draw,label=above:$F_3$] {}
  node(F2) at ( 1,-1) [circle,fill=black,draw,label=above:$F_2$] {}
  node(F1) at (1,-2) [circle,fill=black,draw,label=above:$F_{1}$] {}
  node(C3) at ( 8,1.5) [circle,draw,label=above:$C_3$] {}
  node(C1) at (8,-3.5) [circle,draw,label=below:$C_1$] {}
  node(H1) at ( 2,-1) [circle,fill=black,draw,label=below:$H_1$] {}
  node(H1p) at ( 2.8,-1) [circle,fill=black,draw,label=below:$H_1^\prime$] {}
   node(H2) at ( 3.8,-1) [circle,fill=black,draw,label=below:$H_2$] {}
  node(H2p) at (4.6,-1) [circle,fill=black,draw,label=below:$H_2^\prime$] {}
   node(H3) at (5.6,-1) [circle,fill=black,draw,label=below:$H_3$] {}
  node(H3p) at (6.4,-1) [circle,fill=black,draw,label=below:$H_3^\prime$] {}
   node(H4) at (7.4,-1) [circle,fill=black,draw,label=below:$H_4$] {}
  node(H4p) at (8.2,-1) [circle,fill=black,draw,label=below:$H_4^\prime$] {}
   node(H5) at (9.2,-1) [circle,fill=black,draw,label=below:$H_5$] {}
  node(H5p) at (10,-1) [circle,fill=black,draw,label=below:$H_5^\prime$] {}
   node(H6) at (11,-1) [circle,fill=black,draw,label=below:$H_6$] {}
  node(H6p) at (11.8,-1) [circle,fill=black,draw,label=below:$H_6^\prime$] {}
   node(H7) at (12.8,-1) [circle,fill=black,draw,label=below:$H_7$] {}
  node(H7p) at (13.6,-1) [circle,fill=black,draw,label=below:$H_7^\prime$] {};

\draw[double] (node cs:name=H1) -- (node cs:name=H1p);
\draw[double] (node cs:name=H2) -- (node cs:name=H2p);
\draw[double] (node cs:name=H3) -- (node cs:name=H3p);
\draw[double] (node cs:name=H4) -- (node cs:name=H4p);
\draw[double] (node cs:name=H5) -- (node cs:name=H5p);
\draw[double] (node cs:name=H6) -- (node cs:name=H6p);
\draw[double] (node cs:name=H7) -- (node cs:name=H7p);

\draw[double] (node cs:name=H1p) -- (node cs:name=C3);
\draw[double] (node cs:name=H2p) -- (node cs:name=C3);
\draw[double] (node cs:name=H3p) -- (node cs:name=C3);
\draw[double] (node cs:name=H4p) -- (node cs:name=C3);
\draw[double] (node cs:name=H5p) -- (node cs:name=C3);
\draw[double] (node cs:name=H6p) -- (node cs:name=C3);
\draw[double] (node cs:name=H7p) -- (node cs:name=C3);

\draw (node cs:name=H1) -- (node cs:name=C3);
\draw (node cs:name=H2) -- (node cs:name=C3);
\draw (node cs:name=H3) -- (node cs:name=C3);
\draw (node cs:name=H4) -- (node cs:name=C3);
\draw (node cs:name=H5) -- (node cs:name=C3);
\draw (node cs:name=H6) -- (node cs:name=C3);
\draw (node cs:name=H7) -- (node cs:name=C3);

\draw (node cs:name=H1) -- (node cs:name=C1);
\draw (node cs:name=H2) -- (node cs:name=C1);
\draw (node cs:name=H3) -- (node cs:name=C1);
\draw (node cs:name=H4) -- (node cs:name=C1);
\draw (node cs:name=H5) -- (node cs:name=C1);
\draw (node cs:name=H6) -- (node cs:name=C1);
\draw (node cs:name=H7) -- (node cs:name=C1);

\draw (node cs:name=C2) -- (node cs:name=F1);
\draw (node cs:name=C2) -- (node cs:name=F2);
\draw (node cs:name=C2) -- (node cs:name=F3);
\draw (node cs:name=C2) -- (node cs:name=F4);

\draw (node cs:name=C1) -- (node cs:name=F1);
\draw (node cs:name=C3) -- (node cs:name=F2);
\draw (node cs:name=C3) -- (node cs:name=F3);
\draw (node cs:name=C3) -- (node cs:name=F4);


\end{tikzpicture}
\caption{$C_1,C_3$, and reducible singular fibers of $\varphi$ for case $\rho=13$.}
\end{figure}

 All of the remaining cases (i.e., cases $\rho\ge 14$) can be proved similarly as in the case {$(\rho,a,\delta)=(13,9,1)$}. For each of the remaining cases, we just list the reducible singlar fibers of an elliptic fibration $\varphi$ on $X$, from which we start as in the previous case.  It turns out that, in all of these remaining cases, a component, say $C_1$, of $C$ is a section of $\varphi$, and another component, say $C_k$, of $C$ is a 3-section of $\varphi$.  Then we give the definition of two effective divisors $E_1$ and $E_2$ for which we apply Corollary \ref{cor32}. Then by Corollary \ref{cor32}, we see that ${\rm Ine}\,(C_k)$ contains a non-commutative free subgroup isomorphic to $\Z * \Z$ and an element of positive entropy.
 
 {\it In the rest of the proof, for $1\le i,j\le k$, we use $G_{ij}$, $F_{ij}$, $F_{ij}^\prime$, etc. to denote smooth rational curves which intersect both $C_i$ and $C_j$ ($i=j$ means intersecting with $C_i$ at two distinct points). }

Case {$(\rho,a,\delta)=(14,8,1)$}: Then $k=4$ and $C=C_1+C_2+C_3+C_4$. By  ${\rm NS}(X)\cong U\oplus D_4^{\oplus 2}\oplus A_1^{\oplus 4}$, there exists an elliptic fibration (with a section $C_1$) $\varphi:X\longrightarrow \PP^1$ with exactly 6 reducible fibers: $2C_2+F_{12}+F_{24}+F_{24}^\prime+F_{24}^{\prime\prime}$ (type $I_0^*$), $2C_3+F_{13}+F_{34}+F_{34}^\prime+F_{34}^{\prime\prime}$ (type $I_0^*$), $F_{14}+F_{44}$, $F_{14}^\prime+F_{44}^\prime$, $F_{14}^{\prime\prime}+F_{44}^{\prime\prime}$, $F_{14}^{\prime\prime\prime}+F_{44}^{\prime\prime\prime}$. Let $E_1:=C_2+F_{12}+C_1+F_{14}+C_4+F_{24}$ (type $I_6$) and  $E_2:=C_2+F_{12}+C_1+F_{14}^\prime+C_4+F_{24}$ (type $I_6$). Note that in this case, we use Lemma \ref{lem:positiveMW} (2) to prove the positivity of Mordell-Weil ranks.

Case {$(\rho,a,\delta)=(15,7,1)$}: Then $k=5$ and $C=C_1+...+C_5$. By  ${\rm NS}(X)\cong U\oplus D_4^{\oplus 3}\oplus A_1$,  there exists an elliptic fibration (with a section $C_1$) $\varphi:X\longrightarrow \PP^1$ with exactly 4 reducible fibers: $2C_2+F_{12}+F_{25}+F_{25}^\prime+F_{25}^{\prime\prime}$ (type $I_0^*$), $2C_3+F_{13}+F_{35}+F_{35}^\prime+F_{35}^{\prime\prime}$ (type $I_0^*$), $2C_4+F_{14}+F_{45}+F_{45}^\prime+F_{45}^{\prime\prime}$ (type $I_0^*$), $F_{15}+F_{55}$.  Let $E_1:=C_4+F_{45}+C_5+F_{25}+C_2+F_{12}+C_1+F_{14}$ (type $I_8$) and $E_2:=C_4+F_{45}+C_5+F_{25}^\prime+C_2+F_{12}+C_1+F_{14}$ (type $I_8$). Note that in this case, we use Lemma \ref{lem:positiveMW} (2) to prove the positivity of Mordell-Weil ranks.

Case {$(\rho,a,\delta)=(16,6,1)$}: Then $k=6$ and $C=C_1+...+C_6$. By  ${\rm NS}(X)\cong U\oplus D_6^{\oplus 2}\oplus A_1^{\oplus 2}$, there exists an elliptic fibration (with a section $C_1$) $\varphi:X\longrightarrow \PP^1$ with exactly 4 reducible fibers: $2C_2+2G_{23}+2C_3+F_{13}+F_{36}+F_{26}+F_{26}^\prime$ (type $I_2^*$),  $2C_4+2G_{45}+2C_5+F_{15}+F_{56}+F_{46}+F_{46}^\prime$ (type $I_2^*$), $F_{16}+F_{66}$, $F_{16}^\prime+F_{66}^\prime$.  Let $E_1:=C_3+F_{13}+C_1+F_{15}+C_5+G_{45}+C_4+F_{46}+C_6+F_{26}+C_2+G_{23}$ (type $I_{12}$) and $E_2:=C_3+F_{13}+C_1+F_{15}+C_5+G_{45}+C_4+F_{46}+C_6+F_{26}^{\prime}+C_2+G_{23}$ (type $I_{12}$).

Case {$(\rho,a,\delta)=(17,5,1)$}: Then $k=7$ and $C=C_1+C_2+...+C_7$. By  ${\rm NS}(X)\cong U\oplus D_6\oplus D_8\oplus A_1$, there exists an elliptic fibration (with a section $C_1$) $\varphi:X\longrightarrow \PP^1$ with exactly 3 reducible fibers: $F_{27}+F_{27}^\prime+2C_2+2G_{23}+2C_3+F_{37}+F_{31}$ (type $I_2^*$), $F_{47}+F_{47}^\prime+2C_4+2G_{45}+2C_5+2G_{56}+2C_6+F_{61}+F_{67}$ (type $I_4^*$), $F_{17}+F_{77}$.  Let $E_1:=C_4+G_{45}+C_5+G_{56}+C_6+F_{61}+C_1+F_{31}+C_3+G_{23}+C_2+F_{27}+C_7+F_{47}$ (type $I_{14}$) and $E_2:=C_4+G_{45}+C_5+G_{56}+C_6+F_{61}+C_1+F_{31}+C_3+G_{23}+C_2+F_{27}+C_7+F_{47}^\prime$ (type $I_{14}$).

Case {$(\rho,a,\delta)=(18,4,0)$}: This is the case studied in \cite{Og18}. Then $k=8$ and $C=C_1+C_2+...+C_8$. By  ${\rm NS}(X)\cong U\oplus D_4\oplus D_{12}$,  there exists an elliptic fibration (with a section $C_1$) $\varphi:X\longrightarrow \PP^1$ with exactly 2 reducible fibers: $F_{28}+F_{28}^\prime+2C_2+F_{28}^{\prime\prime}+F_{12}$ (type $I_0^*$), $F_{38}+F_{38}^\prime+2C_3+2G_{34}+2C_4+2G_{45}+2C_5+2G_{56}+2C_6+2G_{67}+2C_7+F_{17}+F_{78}$ (type $I_8^*$).  Let $E_1:=C_3+G_{34}+C_4+G_{45}+C_5+G_{56}+C_6+G_{67}+C_7+F_{17}+C_1+F_{12}+C_2+F_{28}^{\prime\prime}+C_8+F_{38}$ (type $I_{16}$) and $E_2:=C_3+G_{34}+C_4+G_{45}+C_5+G_{56}+C_6+G_{67}+C_7+F_{17}+C_1+F_{12}+C_2+F_{28}^{\prime\prime}+C_8+F_{38}^\prime$ (type $I_{16}$).

Case {$(\rho,a,\delta)=(18,4,1)$}: Then $k=8$ and $C=C_1+C_2+...+C_8$. By  ${\rm NS}(X)\cong U\oplus D_{14}\oplus A_1^{\oplus 2}$,  there exists an elliptic fibration (with a section $C_1$) $\varphi:X\longrightarrow \PP^1$ with exactly 3 reducible fibers:  $F_{28}+F_{28}^\prime+2C_2+2G_{23}+2C_3+2G_{34}+2C_4+2G_{45}+2C_5+2G_{56}+2C_6+2G_{67}+2C_7+F_{78}+F_{17}$ (type $I_{10}^*$), $F_{18}+F_{88}$, $F_{18}^\prime+F_{88}^\prime$.  Let $E_1:=C_2+G_{23}+C_3+G_{34}+C_4+G_{45}+C_5+G_{56}+C_6+G_{67}+C_7+F_{17}+C_1+F_{18}+C_8+F_{28}$ (type $I_{16}$) and $E_2:=C_2+G_{23}+C_3+G_{34}+C_4+G_{45}+C_5+G_{56}+C_6+G_{67}+C_7+F_{17}+C_1+F_{18}+C_8+F_{28}^\prime$ (type $I_{16}$).

Case {$(\rho,a,\delta)=(19,3,1)$}: Then $k=9$ and $C=C_1+C_2+...+C_9$. By  ${\rm NS}(X)\cong U\oplus D_{16}\oplus A_1$,  there exists an elliptic fibration (with a section $C_1$) $\varphi:X\longrightarrow \PP^1$ with exactly 2 reducible fibers:  $F_{29}+F_{29}^\prime+2C_2+2G_{23}+2C_3+2G_{34}+2C_4+2G_{45}+2C_5+2G_{56}+2C_6+2G_{67}+2C_7+2G_{78}+2C_8+F_{89}+F_{18}$ (type $I_{12}^*$), $F_{19}+F_{99}$. Let $E_1:=C_2+G_{23}+C_3+G_{34}+C_4+G_{45}+C_5+G_{56}+C_6+G_{67}+C_7+G_{78}+C_8+F_{89}+C_9+F_{29}$ (type $I_{16}$) and $E_2:=C_2+G_{23}+C_3+G_{34}+C_4+G_{45}+C_5+G_{56}+C_6+G_{67}+C_7+G_{78}+C_8+F_{89}+C_9+F_{29}^\prime$ (type $I_{16}$).  Note that in this case, we use Lemma \ref{lem:positiveMW} (2) to prove the positivity of Mordell-Weil ranks.

Case {$(\rho,a,\delta)=(20,2,1)$}:  This is the case where $X$ is a singular K3 surface of discriminant 4 (cf. Section \ref{sect4}). Then $k=10$ and $C=C_1+...+C_9+C_{0}$, where $C_{0}:=C_{10}$.  By  ${\rm NS}(X)\cong U\oplus E_8\oplus D_{10}$, there exists an elliptic fibration (with a section $C_1$) $\varphi:X\longrightarrow \PP^1$ with exactly 2 reducible fibers:  $2C_2+4G_{23}+6C_3+3F_{30}+5G_{34}+4C_4+3G_{45}+2C_5+F_{15}$ (type $II^*$), $F_{16}+F_{60}+2C_6+2G_{67}+2C_7+2G_{78}+2C_8+2G_{89}+2C_9+F_{90}+F_{90}^\prime$ (type $I_{6}^*$). Let $E_1:=C_3+2F_{30}+C_6+2F_{60}+C_9+2F_{90}+3C_0$ (type $IV^*$) and $E_2:=C_3+2F_{30}+C_6+2F_{60}+C_9+2F_{90}^\prime+3C_0$ (type $IV^*$). Then $|E_1|$ and $|E_2|$ define two different elliptic fibrations $\varphi_i : X\longrightarrow \PP^1$, $i=1,2$. Note that, for both $i=1,2$, $G_{23}$ and $G_{34}$ are two disjoint sections of $\varphi_i$ meeting the same irreducible component $C_3$ of the fiber $E_i$ of $\varphi_i$. Since the group structure of type $IV^*$ fiber is $\C \times \Z/3$ by \cite[Table 1, p. 604]{Ko63}, regarding $O = [G_{23}] \in {\rm MW}\, (\varphi_i)$, the element $[G_{34}] \in {\rm MW}\,(\varphi_i)$ is of infinite order for $i = 1$, $2$. Thus, both $\varphi_1$ and $\varphi_2$ are of positive Mordell-Weil rank. One can now apply Corollary \ref{cor32} to conclude. 

This completes the proof of the theorem.\end{proof}

In the rest of this section, we consider the case $(\rho, a, \delta)=(11,11,1)$. Then $k=1$ and $C=X^\theta$ is an irreducible smooth rational curve. Note that ${\rm Aut}(X)={\rm Dec}(C)$, because $\theta$ is in the center of ${\rm Aut}(X)$.

\begin{lemma}\label{lem:triviallyonNS}
 Let $\phi: {\rm Aut(}X)\longrightarrow {\rm O}({\rm NS}(X))$ be the map given by $\phi (f)=f^*|_{{\rm NS}(X)}$. Then ${\rm Ker}\; \phi=\{id_X, \theta \}$.\end{lemma}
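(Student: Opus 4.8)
The plan is to study the action on the transcendental lattice $T(X) = {\rm NS}(X)^{\perp} \subset H^2(X,\Z)$, which has rank $22 - \rho(X) = 11$. One inclusion is immediate: $\theta^*|_{{\rm NS}(X)} = id_{{\rm NS}(X)}$ by the very definition of $\theta$, so $\{id_X, \theta\} \subseteq {\rm Ker}\,\phi$. For the reverse inclusion I would first record that the restriction map $r : {\rm Ker}\,\phi \to {\rm O}(T(X))$, $f \mapsto f^*|_{T(X)}$, is injective. Indeed, ${\rm NS}(X) \oplus T(X)$ has finite index in $H^2(X,\Z)$, so if $f \in {\rm Ker}\,\phi$ also satisfies $f^*|_{T(X)} = id_{T(X)}$, then $f^*$ fixes the finite-index sublattice ${\rm NS}(X) \oplus T(X)$ and hence acts as the identity on all of $H^2(X,\Z)$; the Torelli theorem for K3 surfaces then gives $f = id_X$. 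Thus it is enough to prove that $f^*|_{T(X)} \in \{id_{T(X)}, -id_{T(X)}\}$ for every $f \in {\rm Ker}\,\phi$, because $-id_{T(X)} = \theta^*|_{T(X)}$ and injectivity of $r$ will then identify $f$ with $id_X$ or with $\theta$.

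The decisive point is that ${\rm rank}\,T(X) = 11$ is odd. Fix $f \in {\rm Ker}\,\phi$ and set $g := f^*|_{T(X)}$, a Hodge isometry of $T(X)$. Then $g\,\omega_X = \zeta\,\omega_X$ for some $\zeta \in \C^{\times}$; since $g$ preserves the positive-definite real $2$-plane spanned by the real and imaginary parts of $\omega_X$ together with its negative-definite orthogonal complement, $g$ lies in a compact group, and being integral it has finite order, so $\zeta$ is a primitive $m$-th root of unity for some $m$. Now the rational Hodge structure $T(X)_{\Q}$ is irreducible (it is the smallest primitive sub-Hodge structure whose complexification contains $H^{2,0}(X)$), so its Hodge endomorphism algebra $E := {\rm End}_{{\rm Hodge}}(T(X)_{\Q})$ is a field, and $g \in E$ acts on the line $H^{2,0}(X)$ by $\zeta$. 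Hence $\Q(\zeta) \subseteq E$ and $T(X)_{\Q}$ becomes a $\Q(\zeta)$-vector space, which forces $\varphi(m) = [\Q(\zeta):\Q]$ to divide $\dim_{\Q} T(X)_{\Q} = 11$. As $11$ is an odd prime and $\varphi(m)$ is never an odd integer larger than $1$, this forces $\varphi(m) = 1$, i.e. $m \in \{1, 2\}$ and $\zeta \in \{1, -1\}$.

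It remains to pass from the eigenvalue $\zeta$ to the operator $g$, again using irreducibility. If $\zeta = 1$, then $\ker(g - id_{T(X)})$ is a nonzero rational sub-Hodge structure of $T(X)_{\Q}$ (it contains $H^{2,0}(X)$ in its complexification), hence all of $T(X)_{\Q}$, so $g = id_{T(X)}$ and therefore $f = id_X$. If $\zeta = -1$, I would apply the same argument to $-g$, which is again a Hodge isometry and fixes $\omega_X$, to conclude $-g = id_{T(X)}$, i.e. $g = -id_{T(X)} = \theta^*|_{T(X)}$; injectivity of $r$ then yields $f = \theta$. This establishes ${\rm Ker}\,\phi = \{id_X, \theta\}$. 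The heart of the proof, and the only step using the specific surface, is the arithmetic input in the middle paragraph: the odd rank $11$ of $T(X)$ excludes every transcendental eigenvalue except $\pm 1$. The surrounding arguments are just the injectivity half of the Torelli theorem and the irreducibility of the transcendental Hodge structure, so I expect the odd-rank dichotomy to be where the real work sits.
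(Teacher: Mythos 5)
Your proof is correct and takes essentially the same approach as the paper's, whose entire argument is the one-line observation that since ${\rm rk}\, T(X)=11$ is odd, the induced action of any automorphism on $T(X)$ must be $\pm id_{T(X)}$; you have simply supplied the standard supporting details (finite order of the action, the $\Q(\zeta_m)$-vector space structure on $T(X)_{\Q}$ forcing $\varphi(m)\mid 11$ and hence $m\le 2$, irreducibility of the transcendental Hodge structure, and the Torelli theorem to pin down $f$ as $id_X$ or $\theta$). One microscopic caveat: Schur's lemma only gives that ${\rm End}_{{\rm Hodge}}(T(X)_{\Q})$ is a division algebra rather than a field, but your argument only uses the commutative subalgebra $\Q[g]$ generated by the isometry itself, so nothing is affected.
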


\begin{proof}
Since ${\rm rk}\; T(X)=11$ an odd number, it follows that, for any automorphism $g$ of $X$, the induced action $g^*$ on $T(X)$ must be $\pm id_{T(X)}$. Thus, ${\rm Ker}\; \phi=\{id_X, \theta \}$. \end{proof}

\begin{lemma}\label{lem:Qbasis}
There exist eleven smooth rational curves such that their classes in ${\rm NS}(X)$ form a $\Q$-basis of ${\rm NS}(X)_{\Q}$. 
\end{lemma}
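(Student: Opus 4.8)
The plan is to build the eleven curves from one genus one fibration together with the fixed curve $C=X^{\theta}$. Since ${\rm NS}(X)\cong U(2)\oplus A_1^{\oplus 9}$, I would first take the primitive isotropic class $e$ generating the isotropic line of the $U(2)$ summand; after moving it into the nef cone by the Weyl group (which preserves the isometry type of $e^{\perp}/\Z e$), the pencil $|e|$ defines a genus one fibration $\varphi=\Phi_{|e|}:X\longrightarrow\BP^1$. As $e^{\perp}=\Z e\oplus A_1^{\oplus 9}$, the quotient $e^{\perp}/\Z e\cong A_1^{\oplus 9}$ is a negative definite root lattice of rank $9$, and every one of its roots is represented by an effective vertical class (any effective $(-2)$-class orthogonal to the nef isotropic $e$ is supported on fibres); hence the reducible fibres of $\varphi$ are exactly nine fibres of type $A_1$ (i.e. $I_2$ or $III$), say $F_j=R_j+R_j'$ for $1\le j\le 9$, with $R_j,R_j'$ smooth rational curves and $R_j+R_j'\equiv e$ in ${\rm NS}(X)$. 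This is the same kind of configuration read off from ${\rm NS}(X)$ as in the cases of Theorem \ref{thm:ge12}.

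Next I would record intersection numbers: $R_i\cdot R_j=-2\delta_{ij}$ for $1\le i,j\le 9$ (distinct fibres are disjoint), $R_j\cdot R_j'=2$ inside one fibre, and $R_j\cdot e=R_j'\cdot e=0$ since fibre components are orthogonal to the fibre class. The decisive geometric input is the curve $C=X^{\theta}$, which is a smooth rational curve because $k=1$. Being irreducible, $C$ is a component of at most one fibre of $\varphi$, so at least one reducible fibre $F_{j_0}$ has both components different from $C$; Lemma \ref{lem:2points} then gives $C\cdot R_{j_0}=C\cdot R_{j_0}'=2$, whence $C\cdot e=C\cdot F_{j_0}=4$. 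In particular $C\cdot e\neq 0$, so $C\notin e^{\perp}$; this also shows that $C$ cannot itself be a fibre component, since all fibres are linearly equivalent and a component would force $C\cdot e=0$.

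Finally I would verify that the eleven smooth rational curves $C,R_1,R_1',R_2,\ldots,R_9$ form a $\Q$-basis of ${\rm NS}(X)_{\Q}$. From a relation $\alpha C+\beta R_1+\beta' R_1'+\sum_{j=2}^{9}\gamma_j R_j=0$, pairing with $e$ yields $4\alpha=0$, so $\alpha=0$; the remaining classes lie in $e^{\perp}_{\Q}$, which has rank $10$. There $\{R_1,\ldots,R_9\}$ has Gram matrix $-2I_9$, hence is negative definite and independent, while $R_1'=e-R_1$ is independent of them because the nonzero isotropic class $e$ cannot lie in the negative definite span of the $R_j$. Thus the eleven classes are linearly independent, and since $\rho(X)=11$ they span ${\rm NS}(X)_{\Q}$. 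The step I expect to be the crux is the one forcing $C\cdot e\neq 0$: a priori $C$ might be a component of a reducible fibre, and it is precisely the existence of nine reducible fibres (so that one of them avoids the irreducible curve $C$), combined with Lemma \ref{lem:2points}, that makes $C$ a $4$-section and supplies the eleventh, transverse, direction.
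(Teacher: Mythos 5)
Your proof is correct and follows essentially the same route as the paper: the same genus one fibration with nine $A_1$-type reducible fibres read off from ${\rm NS}(X)\cong U(2)\oplus A_1^{\oplus 9}$, the same eleven curves $C,R_1,R_1',R_2,\dots,R_9$, and the same use of Lemma \ref{lem:2points} to see that $C$ is not vertical. The only (cosmetic) difference is that the paper verifies independence by computing that the Gram matrix of $[H_1]+[H_1'],[C],[H_1],\dots,[H_9]$ has nonzero determinant, whereas you argue directly by pairing with the fibre class and using negative definiteness of the span of the $R_j$.
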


\begin{proof}
By ${\rm NS}(X)\cong U(2)\oplus A_1^{\oplus 9}$, there exists a genus one fibration, say $\varphi: X\longrightarrow \PP^1$, such that this fibration has exactly 9 reducible singular fibres and each of them consists of exactly two irreducible components, say $H_i$ and $H_i^\prime$, $1\le i\le 9$,  where $H_i$ and $H_i^\prime$ are smooth rational curves.  By Lemma \ref{lem:2points}, $C$ cannot be contained in any fiber of $\varphi$. Then $$ [H_1]+[H_1^\prime], [C], [H_1],[H_2],...,[H_9]$$ form $\Q$-basis of ${\rm NS}(X)_{\Q}$ since the corrsponding Gram matrix has nonzero determinant. Thus $$[H_1^\prime], [C], [H_1],[H_2],...,[H_9]$$ also form a $\Q$-basis of ${\rm NS}(X)_{\Q}$. This completes the proof of the  lemma.\end{proof}

Theorem \ref{thm02} (2), hence Theorem \ref{thm01}, follows from:

\begin{theorem}\label{thm:rho11}
Let $\rho: {\rm Aut(}X)={\rm Dec}(C)\longrightarrow {\rm Aut}(C)$ be the restriction map. If ${\rm Ker}\; \rho \neq \{id_X, \theta \}$, i.e., ${\rm Ine}(C)\neq \{id_X, \theta \}$ , then ${\rm Ine}\,(C)$ contains a non-commutative free subgroup isomorphic to $\Z * \Z$ and an element of positive entropy.
\end{theorem}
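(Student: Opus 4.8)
The plan is to mimic the structure used throughout Section~\ref{sect5}: produce two genus one fibrations of positive Mordell-Weil rank whose singular fibres share the curve $C$ as a common component, and then invoke Corollary~\ref{cor31} via the subgroup ${\rm Dec}\,(X, C, P)$. The case $(\rho,a,\delta)=(11,11,1)$ is more delicate than the cases $\rho\ge 12$ because here $C=X^\theta$ is itself the \emph{unique} fixed curve; there is no second component of $X^\theta$ to serve as a section. So I cannot directly run the Lemma~\ref{lem:Jacobian} argument that let $C_1$ be a section. Instead, the extra hypothesis ${\rm Ine}\,(C)\neq\{id_X,\theta\}$ must be exploited to \emph{feed in} a nontrivial element, and the whole proof should be organized around showing that this one extra element forces ${\rm Dec}\,(C)$ (or ${\rm Dec}\,(X,C,P)$ for a suitable $P$) to be non-almost-abelian.

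First I would fix a nontrivial $g\in{\rm Ine}\,(C)\setminus\{id_X,\theta\}$. By Lemma~\ref{lem:triviallyonNS}, $g^*|_{{\rm NS}(X)}$ is nontrivial (since $g\notin\{id_X,\theta\}={\rm Ker}\,\phi$), so $g$ acts nontrivially on ${\rm NS}(X)$ yet restricts to the identity on $C$. The idea is to use the genus one fibration $\varphi$ from Lemma~\ref{lem:Qbasis}, together with its reducible fibres $H_i+H_i'$, to build concrete genus one pencils containing $C$. Since $C$ is not contained in any fibre (Lemma~\ref{lem:2points}) and $C.H_i=2$ forces $C.H_i=2=C.H_i'$ with each $C_i=C$ meeting both components, one can assemble effective divisors $E_j$ supported on $C$ and the fibre components that are of Kodaira type (e.g.\ $I_n$ for suitable $n$) and whose linear systems $|E_j|$ define genus one fibrations; Lemma~\ref{lem:positiveMW} then supplies positive Mordell-Weil rank provided the numerical constraints on the number of components $r$ are met. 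Concretely, one wants two non-proportional such $E_1,E_2$ with $C$ a common component and a common intersection point $P\in C$, exactly as required by Corollary~\ref{cor32}.

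The main obstacle, and the genuinely new point compared with Theorem~\ref{thm:ge12}, is that here $C$ is a \emph{single} rational curve rather than one component of a larger fixed locus, so the combinatorial recipe for $E_1,E_2$ cannot be copied verbatim; there may simply not be enough room to realize two independent genus one pencils through a common point of $C$ using only $\varphi$'s fibres. This is precisely why the hypothesis ${\rm Ine}\,(C)\neq\{id_X,\theta\}$ enters: rather than constructing both fibrations by hand, I expect the cleaner route is to show that the existence of a nontrivial $g\in{\rm Ine}\,(C)$ acting nontrivially on ${\rm NS}(X)$ forces ${\rm Ine}\,(C)$ to be infinite, and moreover that ${\rm Dec}\,(C)={\rm Aut}(X)$ is not almost abelian. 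One establishes the latter by noting that if ${\rm Aut}(X)$ were almost abelian, then by Theorem~\ref{thm30}(3) it would preserve a genus one fibration and have no positive-entropy element; since $\rho+a=22$ and $X$ contains the smooth rational curve $C$, Lemma~\ref{lem:22} (read in reverse, or rather its proof input via \cite[Theorem 1.4]{Og07}) together with the positive Mordell-Weil rank of $\varphi$ should rule this out, giving a positive-entropy automorphism and hence a $\Z*\Z$ inside ${\rm Aut}(X)$ by Theorem~\ref{thm30}(1),(2).

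Once ${\rm Dec}\,(C)={\rm Aut}(X)$ is known to be non-almost-abelian, the endgame is to upgrade this to a statement about ${\rm Ine}\,(C)$. Here I would apply Theorem~\ref{thm31}: it requires ${\rm Dec}\,(C)$ non-almost-abelian and $|{\rm Ine}\,(C)|=\infty$, and concludes that ${\rm Ine}\,(C)$ contains a positive-entropy element. The infinitude of ${\rm Ine}\,(C)$ is where the hypothesis ${\rm Ine}\,(C)\neq\{id_X,\theta\}$ does its essential work: the given nontrivial $g$ acts nontrivially on ${\rm NS}(X)$, and an element of ${\rm Ine}\,(C)$ acting nontrivially on ${\rm NS}(X)$ cannot be of finite order unless it preserves the configuration rigidly—one argues that $g$ generates an infinite group, for instance because a finite-order isometry fixing $C$ pointwise and acting on the rank-$11$ lattice ${\rm NS}(X)$ would have to lie in a constrained set that, combined with triviality on $C$, collapses back to $\{id_X,\theta\}$. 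Having both hypotheses of Theorem~\ref{thm31} verified, I get a positive-entropy element in ${\rm Ine}\,(C)$; to also produce the free subgroup $\Z*\Z$, I would either pass through Corollary~\ref{cor31} (by exhibiting a point $P\in C$ with ${\rm Dec}\,(X,C,P)$ non-almost-abelian) or apply Theorem~\ref{thm30}(1) directly to ${\rm Ine}\,(C)$, which being infinite and containing a positive-entropy element falls into case~(ii) and hence contains $\Z*\Z$. I expect the lattice-theoretic verification that $g$ has infinite order—equivalently that ${\rm Ine}\,(C)$ is infinite—to be the technical crux.
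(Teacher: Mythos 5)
Your proposal assembles the right ingredients (Lemmas \ref{lem:triviallyonNS}, \ref{lem:Qbasis}, \ref{lem:2points}, and Corollary \ref{cor32}) but misses the one construction that makes the proof work, and the alternative route you substitute has two unproven steps. The paper's argument is short: take $g\in{\rm Ine}\,(C)\setminus\{id_X,\theta\}$; by Lemma \ref{lem:triviallyonNS} it acts nontrivially on ${\rm NS}(X)$, and since by Lemma \ref{lem:Qbasis} eleven smooth rational curves span ${\rm NS}(X)_{\Q}$, $g$ must move some smooth rational curve $H$. Because $g|_C=id_C$ and $H$ meets $C$ in exactly two points (Lemma \ref{lem:2points}), $g(H)$ meets $C$ in the \emph{same} two points, so $E_1:=H+C$ and $E_2:=g(H)+C$ are two non-proportional classes defining genus one fibrations of positive Mordell--Weil rank (Lemma \ref{lem:positiveMW}, with $r=2<\rho-1$), sharing the component $C$ and a common point of $C\cap H\cap g(H)$. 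Corollary \ref{cor32} then finishes. The point you describe as ``the technical crux'' --- showing $g$ has infinite order so that $|{\rm Ine}\,(C)|=\infty$ --- is never needed: the moved curve $g(H)$ is all the hypothesis ${\rm Ine}\,(C)\neq\{id_X,\theta\}$ has to supply.

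Two concrete problems with your substitute route via Theorem \ref{thm31}. First, your argument that ${\rm Dec}\,(C)={\rm Aut}(X)$ is not almost abelian rests on the implication ``almost abelian $\Rightarrow$ no positive-entropy element,'' which is false and is not what Theorem \ref{thm30}(3) says: the cyclic group generated by a single hyperbolic automorphism is abelian yet has positive entropy, and Lemma \ref{lem:22} only gives the converse direction (positive entropy $\Rightarrow$ $\rho+a=22$), not an entropy element from $\rho+a=22$, let alone non-almost-abelianity of the full automorphism group. Second, even granting that, you still owe the hypothesis $|{\rm Ine}\,(C)|=\infty$ of Theorem \ref{thm31}, and your sketch of why a finite-order $g$ fixing $C$ pointwise must lie in $\{id_X,\theta\}$ is not an argument ($\theta$ itself shows that finite-order elements can fix $C$ pointwise, so something beyond ``constrained set'' is needed). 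Both gaps disappear once you use $g$ to produce the pair $H$, $g(H)$ and feed $E_1=H+C$, $E_2=g(H)+C$ into Corollary \ref{cor32}.
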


\begin{proof}
Suppose ${\rm Ine}(C) \neq \{id_X, \theta \}$. Let $g\in {\rm Ine}(C) \setminus \{id_X, \theta\} $. By Lemmas \ref{lem:triviallyonNS} and \ref{lem:Qbasis}, there exists a smooth rational curve $H$ on $X$ such that $g(H)\neq H$. By $g|_C=id_C$ and Lemma \ref{lem:2points}, $H\cap g(H)\cap C$ consists of exactly two points, say $P_1$ and $P_2$. Let $E_1:=H+C$ and $E_2:=g(H)+C$. Then complete linear systems $|E_i|$, $i=1,2$, define two genus one fibrations, say $\varphi_i: X\longrightarrow \PP^1$, of positive Mordell-Weil rank (Lemma \ref{lem:positiveMW}). Then by Corollary \ref{cor32}, ${\rm Ine}\,(C)$ contains a non-commutative free subgroup isomorphic to $\Z * \Z$ and an element of positive entropy. This completes the proof of the theorem.\end{proof}

\end{document}